\documentclass[reqno]{amsart}	

\usepackage[english]{babel}
\usepackage{enumerate}
\usepackage{graphicx}
\usepackage{booktabs}
\usepackage{amsaddr, amsmath, amssymb,mathrsfs}
\usepackage{algorithm, algpseudocode, algorithmicx}
\usepackage{stmaryrd}
\usepackage{caption}
\usepackage{subcaption}
\usepackage{here}
\usepackage{pgfplots}
\usepackage{geometry}
\usepackage{color}
\usepackage[onehalfspacing]{setspace}
\allowdisplaybreaks

\pgfplotsset{compat=1.15}
\definecolor{darkbrown}{rgb}{0.57, 0.40, 0.13}

\newcommand{\EE}    {\mathsf E}
\newcommand{\ep}   {e_{\operatorname{plast}}}

\newcommand{\II}    {\mathcal I}
\newcommand{\Jhp}  {\mathcal{J}_{hp}}

\newcommand{\Th}   {\mathcal{T}_h}
\renewcommand{\P}  {\mathcal P}
\newcommand{\Q}    {\mathcal Q}

\newcommand{\CC}   {\mathbb C}
\newcommand{\HH}   {\mathbb H}
\newcommand{\NN}   {\mathbb N}
\newcommand{\PP}   {\mathbb P}
\newcommand{\RR}   {\mathbb R}
\renewcommand{\SS} {\mathbb S}
\newcommand{\VV}   {\mathbb V}

\newcommand{\dd}   {\mathrm{\; d}}
\newcommand{\Div}  {\operatorname{div}}
\newcommand{\dev}  {\operatorname{dev}}
\newcommand{\tr}   {\operatorname{tr}}

\newcommand{\mv}[1]    {\mathfrak{#1}}
\newcommand{\bs}[1]    {\boldsymbol{#1}}
\newcommand{\wt}[1]    {\widetilde{#1}}
\newcommand{\wh}[1]    {\widehat{#1}}
\newcommand{\abs}[1]   {\left\vert #1 \right\vert}
\newcommand{\norm}[1]  {\left\Vert #1 \right\Vert}
\newcommand{\ul}[1]    {\underline{#1}}

\newtheorem{theorem}{Theorem}

\newtheorem{lem}[theorem]   {Lemma}
\newtheorem{coro}[theorem]	{Corollary}

\theoremstyle{definition}

\newtheorem*{defi*}  {Definition}

\newtheorem*{bsp*}	 {Beispiel}

\begin{document}

\renewcommand{\figurename}{\tiny \textbf{Abb.}}

\title[$hp$-Finite Elements for Elastoplasticity]{$\bs{hp}$-Finite Elements for Elastoplasticity}

\author[P.~Bammer, L.~Banz, M.~Sch\"{o}nauer \& A.~Schr\"{o}der]{Patrick Bammer$^{\, 1}$, Lothar Banz$^{\, 2}$, Miriam Sch\"{o}nauer$^{\, 2}$ \& Andreas Schr\"{o}der$^{\, 2}$
}

\address{\small $^{1}$Mathematisches Institut, Universit\"{a}t Bern,\\ Sidlerstr.~5, CH-3012 Bern, Switzerland 
\vspace{0.15cm}\\
$^{2}$Fachbereich Mathematik, Paris Lodron Universit\"{a}t Salzburg,\\ Hellbrunner~Str.~34, 5020 Salzburg, Austria
}

\begin{abstract}
	This article considers a model problem of elastoplasticity with linearly kinematic hardening and presents $hp$-finite element discretizations of two equivalent weak formulations each having their respective advantages. A mixed variational formulation is introduced to resolve the non-differentiablility of the so-called plasticity functional appearing in the weak formulation of the model problem as a variational inequality of the second kind. The discretization of the mixed formulation is then represented as a system of decoupled nonlinear equations which allows the application of an efficient semismooth Newton solver. Finally, an a priori and a posteriori error analysis is given.
\end{abstract}

\keywords{elastoplasticity, $hp$-finite elements, variational inequality of the second kind, mixed formulation, semi-smooth Newton solver.}

\maketitle
	

\section{Introduction}
\smallskip

Elastoplasticity with hardening appears in various problems of mechanical engineering, for instance, when modelling the deformation of concrete or metals, see e.g.~\cite{ref:Chen_1988}. Thereby, the specific case of \emph{linearly kinematic hardening} plays an important role.

In this article, a model problem of elastoplasticity with linearly kinematic hardening is considered for which a well established weak formulation is given by a variational inequality of the second kind, see, e.g., \cite{carstensen_1999elastopl, ref:Han_1991, ref:Han_2013}. This variational inequality, however, contains a non-differentiable term which leads to many difficulties not only in the analysis but also in the numerics. We discuss how to circumvent these difficulties by reformulating the variational inequality as a \emph{mixed variational formulation}, in which the non-differentiable term is resolved by introducing a Lagrange multiplier, see, e.g., \cite{ref:Han_1991, ref:Han_1995, ref:Schroeder_2011}. 
The use of \emph{biorthogonal basis functions} for the discretization of the plastic strain and the Lagrange multiplier allows to show the equivalence between a discrete variational inequality, in which the non-differentiable term is approximated, and a discretization of the mixed variational formulation. Furthermore, it enables to decouple the constraints associated with the discrete Lagrange multiplier. This gives the possibility to reformulate the discrete mixed variational formulation in terms of a system of decoupled nonlinear equations which, in fact, simplifies the application of an efficient, superlinearly converging semi-smooth Newton solver, see \cite{ref:Bammer2022Icosahom, ref:Bammer2026newtonSolver}. As the weak solution of problems in elastoplasticity usually does not enjoy high regularity properties (despite smooth data and domain), in general, one has to apply $h$- or $hp$-adaptive finite element schemes to achieve possibly high convergence rates. Since the presented discretizations are conforming in the displacement field and the plastic strain variable but non-conforming in the discrete Lagrange multiplier (except for the lowest-order case) we obtain reduced guaranteed convergence rates but the non-conformity is necessary to receive an implementable discretization. Such reduced guaranteed convergence rates are, however, typical for higher-order mixed methods in the context of variational inequalities, see, e.g., \cite{ref:Banz_2022, ref:Banz_2019, ref:Ovcharova_2017}. We note that in the numerical experiments in \cite{ref:Bammer2022Icosahom, ref:Bammer2025apriori, ref:Bammer2025aposteriori, ref:Bammer2026newtonSolver} we do not observe such a reduction of the convergence rates in practice. \vspace{0.15cm}

The article is structured as follows: In Section~\ref{sec:WeakFormulations}, the model problem is introduced and two equivalent weak formulations (a variational inequality and a mixed variational formulation) are derived. 
In Section~\ref{sec:Discretization}, $hp$-finite element discretizations of both weak formulation are presented. Thereby, the discretization of the mixed variational formulation is based on biorthogonal basis functions leading to a system of decoupled nonlinear equations. 
For numerically solving this system a superlinear converging semismooth Newton solver is proposed in Section~\ref{subsec:algebraic_repr}. The last Section is devoted to an a priori and a posteriori error analysis for the discrete mixed variational formulation. For numerical examples confirming the theoretical statements we refer to \cite{ref:Bammer2022Icosahom, ref:Bammer2025apriori, ref:Bammer2025aposteriori, ref:Bammer2026newtonSolver}.

\medskip

\textbf{Notation} -- For any $n\in\NN$, we write $\ul{n}$ for the finite set $\lbrace 1,\ldots, n\rbrace$. We use standard letters such as $v$ for scalar, fraktur font $(\mv{v})$ for vector and bold letters $(\bs{v})$ for matrix valued quantities, respectively. Furthermore, for $d\in\NN$, we denote the inner product of two vectors $\mv{v} = (v_1,\ldots,v_d)^{\top}, \; \mv{w} = (w_1,\ldots,w_d)^{\top}\in\RR^d$ and matrices $\bs{\sigma} = (\sigma_{ij})_{i,j\in\ul{d}}, \; \bs{\tau} = (\tau_{ij})_{i,j\in\ul{d}}\in\RR^{d\times d}$ by
\begin{align*}
	\mv{v} \cdot \mv{w} := \sum_{i\in\ul{d}} v_i \, w_i, \qquad
	\bs{\sigma} : \bs{\tau} := \sum_{i,j\in\ul{d}} \sigma_{ij} \, \tau_{ij},
\end{align*}
respectively, and write $\, \abs{\mv{v}} := ( \mv{v} \cdot \mv{v} )^{1/2}\,$ and $\, \abs{\bs{\tau}}_F := ( \bs{\tau} : \bs{\tau} )^{1/2} \,$ for the corresponding norms. If we show components of a vector or a matrix this will always be done  with respect to the orthogonal basis $\lbrace \mv{e}_1,\ldots,\mv{e}_d\rbrace$ where $\mv{e}_i$, for $1\leq i\leq d$, denotes the $i$-th Euclidean unit vector in $\RR^d$. Finally, we define the spaces 
\begin{align*}
	\SS_d := \Big\lbrace \bs{\tau}\in\RR^{d\times d} \; ; \; \bs{\tau} = \bs{\tau}^{\top} \Big\rbrace, \qquad
	\SS_{d,0} := \Big\lbrace \bs{\tau}\in\SS_d \; ; \; \tr (\bs{\tau}) = 0 \Big\rbrace,
\end{align*}
where $\, \tr (\bs{\tau}) = \sum_{i\in\ul{d}} \tau_{ii} \,$ is the trace of $\bs{\tau}$, and
\begin{align*}
	\dev(\bs{\tau}) := \bs{\tau} - \frac{1}{d} \, \tr(\bs{\tau}) \, \bs{I}	
\end{align*}
denotes the deviatoric part of a matrix $\bs{\tau}\in\RR^{d\times d}$, where $\bs{I}\in\RR^{d\times d}$ is the identity matrix.

\smallskip


\section{Elastoplasticity with Linearly Kinematic Hardening}\label{sec:WeakFormulations}
\smallskip

\textbf{The Model Problem} -- We consider the behavior of a material body which \emph{reference configuration} (i.e., its undeformed and unstressed state) is given by a bounded polygonal domain $\Omega \subset \RR^d$, $d\in\lbrace 2,3\rbrace$, with Lipschitz-boundary $\Gamma := \partial\Omega$ and outer unit normal $\mv{n}$. 
Let the elastoplastic body $\Omega$ be clamped at a closed Dirichlet boundary part $\Gamma_D\subseteq \Gamma$ of positive surface measure, and set $\Gamma_N := \Gamma \setminus \Gamma_D$. Then, for given volume force $\mv{f} : \Omega\longrightarrow\RR^d$ and surface traction $\mv{g}:\Gamma_N\longrightarrow\RR^d$ the model problem of elastoplasticity with linearly kinematic hardening reads: \emph{Find a displacement field $\mv{u}:\Omega\longrightarrow\RR^d$ and a plastic strain $\bs{p}:\Omega\longrightarrow\SS_{d,0}$ such that }
\begin{subequations}\label{eq:model_problem}
\begin{alignat}{2}
	- \Div \big( \bs{\sigma}(\mv{u},\bs{p}) \big) 
	 &= \mv{f}  &&\text{in } \, \Omega, \\
	\mv{u} 
	 &= \mv{0} &&\text{on } \, \Gamma_D, \\
	\bs{\sigma}(\mv{u},\bs{p}) \, \mv{n} 
	 &= \mv{g} &&\text{on } \, \Gamma_N, \\
	\bs{\sigma} - \HH \, \bs{p} 
	 &\in \partial j(\bs{p}) \qquad &&\text{in } \, \Omega. \label{eq:plastic_flowLawNEW}
\end{alignat}
\end{subequations}
Thereby, the Cauchy stress tensor $\bs{\sigma}:\Omega\to\RR^{d\times d}$ is given by the constitutive equation $\bs{\sigma} = \CC(\bs{\varepsilon}-\bs{p})$, where the strain $\, \bs{\varepsilon}(\mv{u}):=\frac12\bigl(\nabla\mv{u}+(\nabla\mv{u})^\top\bigr)$ splits up into a elastic part $\bs{e}$ and a plastic part $\bs{p}$, i.e.,~$\bs{\varepsilon} = \bs{e} + \bs{p}$. Furthermore, $\CC$ and $\HH$ represent the elasticity and hardening tensor, respectively. Finally, $\partial j(\cdot)$ denotes the subdifferential of the dissipation functional $j(\cdot)$ with $\, j(\bs{q}):=\sigma_y|\bs{q}|_F \,$ for $\bs{q}\in\SS_{d,0}$. Here, $\sigma_y$ is the yield stress in uniaxial tension, which, according to  \cite{ref:Bammer2025apriori, ref:Bammer2025aposteriori, ref:Bammer2026newtonSolver}, we assume to be a positive constant.


\textbf{Weak Formulations of the Model Problem} -- We denote by $H^1(\Omega)$ the usual Sobolev space 
and let $H^1(\Omega,\RR^d)$ and $L^2(\Omega,\RR^d)$ be the space of vector-valued functions $\mv{v} = (v_1,\ldots,v_d)^{\top}$ with components $v_i\in H^1(\Omega)$ and $v_i\in L^2(\Omega)$, respectively. Finally, let $L^2(\Omega,\RR^{d\times d})$ be the space of matrix-valued functions $\bs{q} = (q_{ij})$ with components $q_{ij}\in L^2(\Omega)$ and define the spaces
\begin{align*}
	V := \Big\lbrace \mv{v}\in H^1(\Omega,\RR^d) \; ;\; \gamma\mv{v} = \mv{0} \, \text{ on } \, \Gamma_D \Big\rbrace, \qquad
	Q := L^2(\Omega,\SS_{d,0}),
\end{align*}
where $\gamma : H^1(\Omega,\RR^d) \longrightarrow H^{1/2}(\Gamma_D,\RR^d)$ is the unique trace operator. We simply write $\mv{v}$ for the trace $\gamma\mv{v}$ if its meaning is clear in the following and denote by $(\cdot,\cdot)_{0,\Omega}$ the inner product of $L^2(\Omega)$, $L^2(\Omega,\RR^d)$ and $L^2(\Omega,\RR^{d\times d})$ with associated norm $\norm{\,\cdot\,}_{0,\Omega}$, respectively. We set
\begin{align*}
	\norm{\mv{v}}_{1,\Omega} := \big( \abs{\mv{v}}_{1,\Omega}^2 + \norm{\mv{v}}_{0,\Omega}^2 \big)^{1/2} \qquad
	 \forall \, \mv{v}\in V,
\end{align*}
where $\abs{\mv{v}}_{1,\Omega} := \norm{\bs{\varepsilon}(\mv{v})}_{0,\Omega}$. Note that $\norm{\,\cdot\,}_{1,\Omega}$ is equivalent to the usual Sobolev-norm on $V$ due to \emph{Korn's first inequality}. Therewith, $\VV := V\times Q$ is a Banach space equipped with the norm 
\begin{align*}
    \norm{ (\mv{v},\bs{q}) } := \bigl( \norm{\mv{v}}_{1,\Omega}^2 + \norm{\bs{q}}_{0,\Omega}^2 \bigr)^{1/2} \qquad
     \forall \, (\mv{v},\bs{q})\in\VV.
\end{align*}
Finally, let $V^*$ and $H^{-1/2}(\Gamma_N,\RR^d)$ be the dual space of $V$ and the trace space of $V$ restricted to $\Gamma_N$ with associated dual norm $\norm{\,\cdot\,}_{-\frac{1}{2}, \Gamma_N}$ and denote by $\langle \cdot,\cdot\rangle$ and $\langle\cdot,\cdot\rangle_{\Gamma_N}$ the corresponding duality pairings. \\


\textbf{Variational Inequality Formulation} -- By defining the bilinear form $a : \VV\times \VV\longrightarrow\RR$, the \emph{plasticity functional} $\psi : Q\longrightarrow\RR$ and the linear form $\ell : V\longrightarrow\RR$ by
\begin{align*}
	a\big( (\mv{v},\bs{q}), (\mv{w},\bs{\mu}) \big) 
	 &:= \big( \bs{\sigma}(\mv{v},\bs{q}), \bs{\varepsilon}(\mv{w}) - \bs{\mu} \big)_{0,\Omega} + ( \HH \, \bs{q}, \bs{\mu} )_{0,\Omega}, \\
	\psi(\bs{q}) &:= (\sigma_y, \abs{\bs{q}}_F)_{0,\Omega}, \\
	\ell(\mv{v}) &:= \langle \mv{f},\mv{v}\rangle + \langle \mv{g},\mv{v}\rangle_{\Gamma_N},
\end{align*}
where $\mv{f}\in V^*$ and $\mv{g}\in H^{-1/2}(\Gamma_N,\RR^d)$, we arrive at the well established weak formulation of \eqref{eq:model_problem}, see, e.g., \cite{carstensen_1999elastopl, ref:Han_1991, ref:Han_2013}: \emph{Find a pair $(\mv{u},\bs{p})\in\VV$ such that}
\begin{align}\label{eq:viq2}
	a\big( (\mv{u},\bs{p}), (\mv{v} - \mv{u}, \bs{q} - \bs{p}) \big) + \psi(\bs{q}) - \psi(\bs{p}) \geq \ell(\mv{v} - \mv{u}) \qquad
	 \forall \, (\mv{v},\bs{q})\in\VV.
\end{align}
%


In order to guarantee the unique existence of a weak solution let the elasticity tensor $\CC$ and the hardening tensor $\HH$ with components $\CC_{ijkl},\HH_{ijkl}\in L^{\infty}(\Omega)$ satisfy the symmetry properties $\CC_{ijkl} = \CC_{jilk} = \CC_{klij}$ and $\HH_{ijkl} = \HH_{jilk} = \HH_{klij}$ for all $i,j,k,l\in\ul{d}$ and, in addition, be uniformly elliptic, i.e., there are constants $c_e,c_h>0$ such that 
%
\begin{align*}
	(\CC \, \bs{q}) : \bs{q} \geq c_e \abs{\bs{q}}_F^2, \quad
	(\HH \, \bs{q}) : \bs{q} \geq c_h \abs{\bs{q}}_F^2 \qquad
	 \forall \, \bs{q}\in Q.
\end{align*}
Therewith, $a(\cdot,\cdot)$ is a symmetric, continuous and $\VV$-elliptic bilinear form on $\VV$, see \cite{ref:Bammer2024Diss, ref:Han_2013}. Furthermore, the plasticity functional $\psi(\cdot)$ is convex, Lipschitz-continuous 
and, see Lemma~1, subdifferentiable.

\begin{lem}
	For arbitrary $\bs{\mu}\in Q$ there exists an element $D\psi(\bs{\mu})$ in $Q^*$, the dual space of $Q$, such that 
	\begin{align*}
		\psi(\bs{q}) \geq \psi(\bs{\mu}) + \langle D\psi(\bs{\mu}), \bs{q} - \bs{\mu} \rangle \qquad
		 \forall \, \bs{q}\in Q.
	\end{align*}
\end{lem}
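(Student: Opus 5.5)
We construct a subgradient of $\psi$ at $\bs{\mu}$ explicitly, working pointwise. Set
\begin{align*}
	\bs{\lambda}(x) := \begin{cases} \sigma_y \, \dfrac{\bs{\mu}(x)}{\abs{\bs{\mu}(x)}_F}, & \bs{\mu}(x) \neq \bs{0}, \\[0.2cm] \bs{0}, & \bs{\mu}(x) = \bs{0}, \end{cases}
\end{align*}
and define $\langle D\psi(\bs{\mu}), \bs{q}\rangle := (\bs{\lambda}, \bs{q})_{0,\Omega}$ for $\bs{q}\in Q$. On the set $\{\bs{\mu}=\bs{0}\}$ any pointwise value with $\abs{\bs{\lambda}}_F\le\sigma_y$ would also work; choosing $\bs{0}$ keeps measurability obvious.

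\textbf{Step 1: $\bs{\lambda}\in Q$.} Since $\bs{\mu}$ is measurable, $\bs{\lambda}$ is measurable: it is a quotient of measurable functions on the measurable set $\{\bs{\mu}\neq\bs{0}\}$ and zero elsewhere. Pointwise, $\bs{\lambda}(x)$ is symmetric and trace-free, being a scalar multiple of $\bs{\mu}(x)\in\SS_{d,0}$. Moreover $\abs{\bs{\lambda}}_F\le\sigma_y$ a.e., so $\bs{\lambda}\in L^\infty\subset L^2(\Omega,\SS_{d,0})$ because $\Omega$ is bounded. Consequently $D\psi(\bs{\mu})$ is a bounded linear functional on $Q$, with norm at most $\sigma_y\abs{\Omega}^{1/2}$ by Cauchy--Schwarz, so $D\psi(\bs{\mu})\in Q^*$.

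\textbf{Step 2: pointwise subgradient inequality.} We show that for a.e.\ $x$ and every $\bs{q}\in\SS_{d,0}$,
\begin{align*}
	\sigma_y \abs{\bs{q}}_F \ge \sigma_y\abs{\bs{\mu}(x)}_F + \bs{\lambda}(x):(\bs{q}-\bs{\mu}(x)).
\end{align*}
The key identities are $\bs{\lambda}:\bs{\mu}=\sigma_y\abs{\bs{\mu}}_F$, which holds in both cases, and $\bs{\lambda}:\bs{q}\le\abs{\bs{\lambda}}_F\abs{\bs{q}}_F\le\sigma_y\abs{\bs{q}}_F$ by the Cauchy--Schwarz inequality for the Frobenius product. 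Substituting these gives the right-hand side $=\bs{\lambda}:\bs{q}\le\sigma_y\abs{\bs{q}}_F$.

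\textbf{Step 3: integrate.} Integrating the pointwise inequality over $\Omega$ with $\bs{q}=\bs{q}(x)$, and using that all terms are integrable ($\abs{\bs{q}}_F\in L^2\subset L^1$ and $\bs{\lambda}:\bs{q}\in L^1$), yields $\psi(\bs{q})\ge\psi(\bs{\mu})+\langle D\psi(\bs{\mu}),\bs{q}-\bs{\mu}\rangle$.

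There is no real obstacle here. The only points needing care are the measurability and membership of $\bs{\lambda}$ in $Q$ (trace-freeness and square-integrability) and the treatment of the set where $\bs{\mu}$ vanishes, all of which are handled in Step 1.
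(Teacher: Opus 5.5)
Your proof is correct and follows essentially the same route as the paper. You use the same subgradient representative $\sigma_y\,\bs{\mu}/\abs{\bs{\mu}}_F$ (set to $\bs{0}$ where $\bs{\mu}$ vanishes) and obtain the inequality from the Frobenius Cauchy--Schwarz inequality together with $\bs{\lambda}:\bs{\mu}=\sigma_y\abs{\bs{\mu}}_F$. Your pointwise formulation and explicit check that this function lies in $Q$ are slightly more careful than the paper's split-integral argument, which simply appeals to Riesz' representation theorem.
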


\begin{proof}
	For $\bs{\mu}\in Q$ let $A_{\bs{\mu}} := \lbrace \mv{x}\in\Omega \; ;\; \bs{\mu}(\mv{x})\neq \bs{0} \rbrace$. Note that 
	\begin{align*}
		\frac{\bs{\mu}}{\abs{\bs{\mu}}_F} : \bs{q} \leq \abs{ \frac{\bs{\mu}}{\abs{\bs{\mu}}_F} }_F \, \abs{\bs{q}}_F
		 = \abs{\bs{q}}_F \qquad
		  \forall \, \bs{q}\in Q
	\end{align*}
	by the Cauchy-Schwarz inequality. Thus, by defining 
    \begin{align*}
        \wt{\bs{q}} := \tfrac{\sigma_y}{\abs{\bs{\mu}}_F} \, \bs{\mu} \quad \text{on } \, A_{\bs{\mu}}
         \qquad\text{and}\qquad
        \wt{\bs{q}} := \bs{0} \quad \text{on }\, \Omega\setminus A_{\bs{\mu}},
    \end{align*}
	we deduce from the above inequality and since $\sigma_y>0$ that
	\begin{align*}
		\psi(\bs{q}) - \psi(\bs{\mu})
		 &= \int_{A_{\bs{\mu}}} \sigma_y \, \big( \abs{\bs{q}}_F - \abs{\bs{\mu}}_F \big) \dd\mv{x} + \int_{\Omega\setminus A_{\bs{\mu}}} \sigma_y \, \abs{\bs{q}}_F \dd\mv{x} \\
		 &\geq \int_{A_{\bs{\mu}}} \sigma_y \bigg( \frac{\bs{\mu}}{\abs{\bs{\mu}}_F} : \bs{q} - \frac{\bs{\mu}}{\abs{\bs{\mu}}_F} : \bs{\mu} \bigg) \dd\mv{x} \\
		 &= \int_{A_{\bs{\mu}}} \frac{\sigma_y}{\abs{\bs{\mu}}_F} \, \bs{\mu} : (\bs{q} - \bs{\mu}) \dd\mv{x}
		 =  \int_\Omega \wt{\bs{q}} : (\bs{q} - \bs{\mu}) \dd \mv{x}.
	\end{align*}
	By Riesz' representation theorem $\wt{\bs{q}}$ uniquely determines an element $D\psi(\bs{\mu})\in Q^{\ast}$ that satisfies the desired inequality.
\end{proof}

The properties of $a(\cdot,\cdot)$, $\psi(\cdot)$ and $\ell(\cdot)$ guarantee that the energy functional 
\begin{align*}
	\EE : \VV\longrightarrow\RR, \qquad
	\EE(\mv{v},\bs{q}) := \frac{1}{2} \, a\big( (\mv{v},\bs{q}), (\mv{v},\bs{q}) \big) + \psi(\bs{q}) - \ell(\mv{v})
\end{align*}
is coercive, convex and subdifferentiable, see, e.g.~\cite{ref:Bammer2024Diss}. Moreover, $(\mv{u},\bs{p})\in\VV$ satisfies \eqref{eq:viq2} if and only if it solves the minimization problem: \emph{Find a pair $(\mv{u},\bs{p})\in\VV$ such that} 
\begin{align*}
	\EE(\mv{u},\bs{p}) \leq \EE(\mv{v},\bs{q}) \qquad
	 \forall \, (\mv{v},\bs{q})\in\VV
\end{align*}
see, e.g., \cite{ref:Han_1991, ref:Han_2013}. Since this problem has a unique solution by \cite{ref:Ekeland_1999} there exists a unique solution $(\mv{u},\bs{p})\in\VV$ of \eqref{eq:viq2}. \\


\textbf{Mixed Variational Formulation} -- To circumvent problems caused by the non-differentiability of $\psi(\cdot)$, we introduce a Lagrange multiplier resolving the Frobenius norm in the definition of $\psi(\cdot)$. We therefore define the nonempty, closed and convex set
\begin{align}\label{eq:lambda_strong}
	\Lambda := \Big\lbrace \bs{\mu}\in Q \; ; \; \abs{\bs{\mu}}_F \leq \sigma_y \, \text{ a.e.~in } \Omega \Big\rbrace,
\end{align}
which can equivalently be represented as
\begin{align}\label{eq:lambda_weak}
	\Lambda = \Big\lbrace \bs{\mu}\in Q \; ; \; (\bs{\mu},\bs{q})_{0,\Omega} \leq \psi(\bs{q}) \quad \forall \, \bs{q}\in Q \Big\rbrace,
\end{align}
see \cite{ref:Bammer2025apriori}.  
As $(\bs{\mu},\bs{q})_{0,\Omega} \leq \psi(\bs{q})$ for arbitrary $\bs{q}\in Q$ and any $\bs{\mu}\in\Lambda$ the representation \eqref{eq:lambda_weak} implies 
\begin{align}\label{eq:my_inequality}
	\sup_{\bs{\mu}\in\Lambda} ( \bs{\mu},\bs{q})_{0,\Omega} \leq \psi(\bs{q}).
\end{align}
	Since for $\wt{\bs{\mu}} := \frac{\sigma_y}{\abs{\bs{q}}_F} \, \bs{q}\in Q$ if $\bs{q}\neq\bs{0}$ we have
	\begin{align*}
		(\wt{\bs{\mu}},\bs{q})_{0,\Omega} 
		 = \int_\Omega \frac{\sigma_y}{\abs{\bs{q}}_F} \, \bs{q} : \bs{q} \dd\mv{x}
		 = \int_\Omega \sigma_y \, \abs{\bs{q}}_F \dd\mv{x}
		 =\psi(\bs{q}) \qquad
		  \forall \, \bs{q}\in Q\setminus\lbrace\bs{0}\rbrace,
	\end{align*}
	it follows that $\wt{\bs{\mu}}\in\Lambda$ (for $\bs{q}=\bs{0}$ we take $\wt{\bs{\mu}} := \bs{0}$). Thus, the inequality \eqref{eq:my_inequality} together with 
	\begin{align*}
		(\wt{\bs{\mu}},\bs{q})_{0,\Omega}~\leq~\sup_{\bs{\mu}\in\Lambda} ( \bs{\mu},\bs{q})_{0,\Omega}
	\end{align*}
	gives
	\begin{align*}
		\psi(\bs{q}) = \sup_{\bs{\mu}\in\Lambda} ( \bs{\mu},\bs{q})_{0,\Omega} \qquad
		 \forall \, \bs{q}\in Q.
\end{align*}		
This consideration motivates to introduce the mixed variational formulation: \emph{Find a triple $(\mv{u},\bs{p},\bs{\lambda})\in\VV\times\Lambda$ such that}
\begin{subequations}\label{eq:mixed_varF}
\begin{alignat}{2}
	a\big( (\mv{u},\bs{p}), (\mv{v},\bs{q}) \big) + (\bs{\lambda},\bs{q})_{0,\Omega} &= \ell(\mv{v}) \qquad 
	 &&\forall \, (\mv{v},\bs{q})\in\VV, \label{eq:weak_vareq} \\
	(\bs{\mu} - \bs{\lambda}, \bs{p})_{0,\Omega} &\leq 0
	 &&\forall \, \bs{\mu}\in\Lambda.
\end{alignat}
\end{subequations}
Since there exists a unique solution $(\mv{u},\bs{p})\in\VV$ of \eqref{eq:viq2} the first part of the next result guarantees the unique existence of a solution $(\mv{u},\bs{p},\bs{\lambda})\in\VV\times\Lambda$ of \eqref{eq:mixed_varF}.

\begin{theorem}[Equivalence of the Weak Formulations -- {\cite[Thm.1, Lem.3]{ref:Bammer2025apriori}}]
\quad
	\begin{enumerate}
		\item[\bf 1.] If $(\mv{u},\bs{p})\in\VV$ satisfies \eqref{eq:viq2}, then $(\mv{u},\bs{p},\bs{\lambda})$ with 
\begin{align}\label{defi:special_lambda}
	\bs{\lambda} := \dev\big( \bs{\sigma}(\mv{u},\bs{p}) - \HH\, \bs{p} \big)
\end{align}
		solves \eqref{eq:mixed_varF}. Conversely, if $(\mv{u},\bs{p},\bs{\lambda})\in\VV\times\Lambda$ solves \eqref{eq:mixed_varF}, then $(\mv{u},\bs{p})$ satisfies \eqref{eq:viq2} and the identity \eqref{defi:special_lambda} holds true.
		
		\item[\bf 2.] The solution $(\mv{u},\bs{p},\bs{\lambda})\in\VV\times\Lambda$ of \eqref{eq:mixed_varF} depends Lipschitz-continuously on the data $\mv{f}, \mv{g}$ and $\sigma_y$, i.e., if $(\mv{u}_i,\bs{p}_i,\bs{\lambda}_i)\in\VV\times \Lambda$, for $i=1,2$, denotes the solution of \eqref{eq:mixed_varF} for the given data $\mv{f}_i, \mv{g}_i$ and $\sigma_{y,i}$, respectively, there are constants $c, c_{\operatorname{tr}} >0$ such that
		\begin{align*}
			&\norm{ (\mv{u}_2 - \mv{u}_1, \bs{p}_2 - \bs{p}_1) } + \norm{ \bs{\lambda}_2 - \bs{\lambda}_1 }_{0,\Omega} \\ 
			& \hspace{2cm} \leq c \, \Big( \Vert \sigma_{y,2} - \sigma_{y,1} \Vert_{0,\Omega} + \Vert \mv{f}_2 - \mv{f}_1 \Vert_{V^{\ast}} + c_{\operatorname{tr}} \, \Vert \mv{g}_2 - \mv{g}_1 \Vert_{-\frac{1}{2},\Gamma_N} \Big).
		\end{align*}
	\end{enumerate}
\end{theorem}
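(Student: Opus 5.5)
For part~1, direction ``$\Rightarrow$'', I start from \eqref{eq:viq2}. Testing with $(\mv{v},\bs{q}) = (\mv{u}\pm\mv{w},\bs{p})$ kills $\psi$ and gives $a((\mv{u},\bs{p}),(\mv{w},\bs{0})) = \ell(\mv{w})$ for all $\mv{w}\in V$. I then define $\bs{\lambda}$ by \eqref{defi:special_lambda}. Since $\bs{q}\in Q$ is trace free, $(\bs{\sigma}-\HH\bs{p},\bs{q})_{0,\Omega} = (\bs{\lambda},\bs{q})_{0,\Omega}$, and therefore
\[
a((\mv{u},\bs{p}),(\mv{0},\bs{q})) = -(\bs{\lambda},\bs{q})_{0,\Omega}.
\]
This gives \eqref{eq:weak_vareq}. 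Next I insert $(\mv{u},\bs{q})$ into \eqref{eq:viq2}, which yields $(\bs{\lambda},\bs{q}-\bs{p})_{0,\Omega} \leq \psi(\bs{q})-\psi(\bs{p})$ for all $\bs{q}\in Q$. The choices $\bs{q}=2\bs{p}$ and $\bs{q}=\bs{0}$, together with the positive homogeneity of $\psi$, give $(\bs{\lambda},\bs{p})_{0,\Omega} = \psi(\bs{p})$. Substituting this back leaves $(\bs{\lambda},\bs{q})_{0,\Omega}\le\psi(\bs{q})$ for all $\bs{q}$, so $\bs{\lambda}\in\Lambda$ by \eqref{eq:lambda_weak}. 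Finally, for any $\bs{\mu}\in\Lambda$ we have $(\bs{\mu},\bs{p})_{0,\Omega}\le\psi(\bs{p})=(\bs{\lambda},\bs{p})_{0,\Omega}$, which is the second inequality of \eqref{eq:mixed_varF}.

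For the converse, let $(\mv{u},\bs{p},\bs{\lambda})$ solve \eqref{eq:mixed_varF}. The inequality together with $\psi(\bs{p}) = \sup_{\bs{\mu}\in\Lambda}(\bs{\mu},\bs{p})_{0,\Omega}$ (shown above) gives $(\bs{\lambda},\bs{p})_{0,\Omega}=\psi(\bs{p})$. Testing \eqref{eq:weak_vareq} with $(\mv{v}-\mv{u},\bs{q}-\bs{p})$ gives
\[
a(\cdot,\cdot)-\ell = -(\bs{\lambda},\bs{q})_{0,\Omega} + \psi(\bs{p}) \geq \psi(\bs{p})-\psi(\bs{q}),
\]
where the last step uses $\bs{\lambda}\in\Lambda$ in the form \eqref{eq:lambda_weak}. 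This is exactly \eqref{eq:viq2}. Taking $\mv{v}=\mv{0}$ and arbitrary $\bs{q}$ in \eqref{eq:weak_vareq} gives $(\bs{\lambda}-(\bs{\sigma}-\HH\bs{p}),\bs{q})_{0,\Omega}=0$ for all $\bs{q}\in Q$. Since $\bs{\lambda}$ is trace free and symmetric, while $\bs{\sigma}-\HH\bs{p}$ is symmetric by the symmetry of $\CC$ and $\HH$, the $L^2$-projection onto $\SS_{d,0}$ is $\dev$, and \eqref{defi:special_lambda} follows.

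For part~2, I subtract the two versions of \eqref{eq:weak_vareq} and test with $(\mv{u}_2-\mv{u}_1,\bs{p}_2-\bs{p}_1)$. Ellipticity of $a$ then gives
\[
\alpha\norm{(\delta\mv{u},\delta\bs{p})}^2 \le \ell_2(\delta\mv{u})-\ell_1(\delta\mv{u}) - (\bs{\lambda}_2-\bs{\lambda}_1,\bs{p}_2-\bs{p}_1)_{0,\Omega}.
\]
The main obstacle is the multiplier term, because $\Lambda$ depends on $\sigma_y$ and so $\bs{\lambda}_1$ is not admissible in the second problem. I handle this by scaling. Set $\wt{\bs{\mu}}:=\bs{\lambda}_1\min(1,\sigma_{y,2}/\sigma_{y,1})$, which lies in $\Lambda_2$ and satisfies $|\wt{\bs{\mu}}-\bs{\lambda}_1|_F\le|\sigma_{y,2}-\sigma_{y,1}|$ pointwise; define the symmetric counterpart in $\Lambda_1$ in the same way. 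Inserting these into the two variational inequalities gives
\[
-(\bs{\lambda}_2-\bs{\lambda}_1,\bs{p}_2-\bs{p}_1)_{0,\Omega}\le \norm{\sigma_{y,2}-\sigma_{y,1}}_{0,\Omega}\big(\norm{\bs{p}_1}_{0,\Omega}+\norm{\bs{p}_2}_{0,\Omega}\big).
\]
This is linear in $\norm{\sigma_{y,2}-\sigma_{y,1}}_{0,\Omega}$ but multiplied by the norms of $\bs{p}_i$. I bound those a priori by testing each problem with its own solution, which controls $\norm{(\mv{u}_i,\bs{p}_i)}$ by the data. As a result the constant $c$ depends on bounds of the data.

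The data terms are bounded using $\norm{\mv{f}_2-\mv{f}_1}_{V^*}$ and the trace inequality with constant $c_{\operatorname{tr}}$, and Young's inequality then gives the estimate for $(\delta\mv{u},\delta\bs{p})$. For the multiplier I use the identity \eqref{defi:special_lambda}: $\dev$ is $L^2$-stable, and $\CC$, $\HH$ are bounded, so $\norm{\bs{\lambda}_2-\bs{\lambda}_1}_{0,\Omega}\le C\norm{(\delta\mv{u},\delta\bs{p})}$. This completes the estimate. The obtained bound is square-rooted in the $\sigma_y$ term unless the product estimate is absorbed more carefully, so I would try to refine the scaling argument until the bound is genuinely linear in $\norm{\sigma_{y,2}-\sigma_{y,1}}_{0,\Omega}$.
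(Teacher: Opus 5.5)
Part~1 of your proposal is correct: the tests $(\mv{u}\pm\mv{w},\bs{p})$, $(\mv{u},2\bs{p})$ and $(\mv{u},\bs{0})$ in \eqref{eq:viq2}, the use of \eqref{eq:lambda_weak} and of $\psi(\bs{q})=\sup_{\bs{\mu}\in\Lambda}(\bs{\mu},\bs{q})_{0,\Omega}$, and the identification of $\bs{\lambda}$ by $L^2$-orthogonality to $Q$ all work.

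\textbf{The gap is in Part~2.} You name it yourself at the end, but leave it unresolved, so the claimed Lipschitz estimate is not proved. You bound the two cross terms separately with rescaled multipliers and get $-(\bs{\lambda}_2-\bs{\lambda}_1,\bs{p}_2-\bs{p}_1)_{0,\Omega}\le\norm{\sigma_{y,2}-\sigma_{y,1}}_{0,\Omega}\bigl(\norm{\bs{p}_1}_{0,\Omega}+\norm{\bs{p}_2}_{0,\Omega}\bigr)$. This right-hand side does not scale with $\norm{(\delta\mv{u},\delta\bs{p})}$. Inserted into $\alpha\norm{(\delta\mv{u},\delta\bs{p})}^2\le\dots$, it only gives
$\norm{(\delta\mv{u},\delta\bs{p})}\lesssim\norm{\mv{f}_2-\mv{f}_1}_{V^*}+c_{\operatorname{tr}}\norm{\mv{g}_2-\mv{g}_1}_{-\frac12,\Gamma_N}+\bigl(\norm{\sigma_{y,2}-\sigma_{y,1}}_{0,\Omega}(\norm{\bs{p}_1}_{0,\Omega}+\norm{\bs{p}_2}_{0,\Omega})\bigr)^{1/2}$.
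That is H\"older-$\tfrac12$ dependence on $\sigma_y$, with a constant that depends on the size of the data. The statement asserts linear dependence with a data-independent $c$. Saying that you would try to refine the scaling leaves the key step of Part~2 undone.

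\textbf{The missing idea} is to use the cancellation between the two cross terms rather than estimating them against $\bs{p}_1$ and $\bs{p}_2$ individually. Set $\psi_i(\bs{q}):=(\sigma_{y,i},\abs{\bs{q}}_F)_{0,\Omega}$. You already showed in Part~1 that $(\bs{\lambda}_i,\bs{p}_i)_{0,\Omega}=\psi_i(\bs{p}_i)$ and $(\bs{\lambda}_i,\bs{q})_{0,\Omega}\le\psi_i(\bs{q})$ for all $\bs{q}\in Q$. Expanding the product then gives
\begin{align*}
-(\bs{\lambda}_2-\bs{\lambda}_1,\bs{p}_2-\bs{p}_1)_{0,\Omega}
&\le\psi_1(\bs{p}_2)-\psi_2(\bs{p}_2)+\psi_2(\bs{p}_1)-\psi_1(\bs{p}_1)
=\bigl(\sigma_{y,1}-\sigma_{y,2},\,\abs{\bs{p}_2}_F-\abs{\bs{p}_1}_F\bigr)_{0,\Omega}\\
&\le\norm{\sigma_{y,2}-\sigma_{y,1}}_{0,\Omega}\,\norm{\bs{p}_2-\bs{p}_1}_{0,\Omega},
\end{align*}
using Cauchy--Schwarz and the reverse triangle inequality. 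Equivalently, test \eqref{eq:viq2} for the first data set with $(\mv{u}_2,\bs{p}_2)$, for the second with $(\mv{u}_1,\bs{p}_1)$, and add.

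Every term on the right of the ellipticity estimate is now linear in $\norm{(\delta\mv{u},\delta\bs{p})}$. Dividing by it yields the linear bound with $c$ depending only on the ellipticity constant of $a(\cdot,\cdot)$, and no a priori bounds on $\bs{p}_i$ are needed. Your estimate of $\bs{\lambda}_2-\bs{\lambda}_1$ via \eqref{defi:special_lambda} then completes the proof.

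For constant yield stresses your scaling idea can also be repaired. If $\sigma_{y,1}\ge\sigma_{y,2}$, use $\tfrac{\sigma_{y,1}}{\sigma_{y,2}}\bs{\lambda}_2\in\Lambda_1$ in the first inequality instead of $\bs{\lambda}_2$ itself. This produces $(1-\tfrac{\sigma_{y,2}}{\sigma_{y,1}})(\bs{\lambda}_1,\bs{p}_2-\bs{p}_1)_{0,\Omega}$, which is again controlled by $\norm{\bs{p}_2-\bs{p}_1}_{0,\Omega}$.
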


\medskip


\section{$hp$-FE Discretization using Biorthogonal Basis Functions}\label{sec:Discretization}
\smallskip

Let $\wh{T} := [-1,1]^d$ be the reference element and $\Th$ a locally quasi-uniform finite element mesh of $\Omega$ consisting of convex and shape regular quadrilaterals ($d=2$) or hexahedrons ($d=3$). For $T\in\Th$, let $\, \mv{M}_T:\wh{T}\longrightarrow T \,$ be the bi/tri-linear bijective mapping and set $h := (h_T)_{T\in\Th}$, $p := (p_T)_{T\in\Th}$ with local element size $h_T$ and polynomial degree $p_T$. We assume the local polynomial degrees of neighbouring elements to be comparable and refer to \cite{ref:melenk_2005} for details on quasi-uniformity and comparable polynomial degrees. Next, we introduce the conforming $hp$-finite element spaces
\begin{align*}
	V_{hp} &:= \Big\lbrace \mv{v}_{hp} \in V \; ; \; \mv{v}_{hp \, \vert \, T} \circ \mv{M}_T \in\big( \PP_{p_T}(\wh{T})\big)^d \quad \forall \, T\in\Th \Big\rbrace,\\
	Q_{hp} &:= \Big\lbrace \bs{q}_{hp}\in Q_{hp} \; ; \; \bs{q}_{hp \, \vert \, T} \circ \mv{M}_T \in \big(\PP_{p_T-1}(\wh{T})\big)^{d\times d} \quad \forall \, T\in\Th \Big\rbrace
\end{align*}
and set $\VV_{hp} := V_{hp}\times Q_{hp}$, where $\PP_{p_T}(\wh{T})$ denotes the space of polynomials up to degree $p_T$ on $\wh{T}$. 
For $T\in\Th$, let $n_T := p_T^d$ and denote by $\hat{\mv{x}}_{k,T}\in\wh{T}$ the tensor product Gauss quadrature points on $\wh{T}$ with corresponding weights $\omega_{k,T}>0$ for $k\in\underline{n_T}$. Moreover, for $T\in\Th$ and $k\in\ul{n_T}$, we write $\wh{\phi}_{k,T}\in\PP_{p_T-1}(\wh{T})$ for the Lagrange basis functions on $\wh{T}$ associated with these points, i.e.,
\begin{align*}
	\wh{\phi}_{k,T}(\hat{\mv{x}}_{l,T}) = \delta_{kl} \qquad
	 \forall \, k,l\in\ul{n_T},
\end{align*}
where $\delta_{kl}$ is the Kronecker delta. For $N := \sum_{T\in\Th} n_T$, we define $\phi_1,\ldots,\phi_N : \Omega\longrightarrow\RR$ elementwise by 
\begin{align*}
	\phi_{\zeta(k,T') \, \vert \, T} := \begin{cases}
		\wh{\phi}_{k,T'}\circ\mv{M}_T^{-1}, & \text{if } \, T=T', \\
		0, & \text{if } \, T\neq T',
	\end{cases} \qquad
	 \forall \, T,T'\in\Th \quad \forall \, k\in\ul{n_T},
\end{align*}
where $\zeta : \lbrace (k,T) \; ; \; k\in\ul{n_T}, \; T\in\Th \rbrace\longrightarrow\lbrace 1,\ldots,N\rbrace$ is one-to-one. Finally, we choose \emph{biorthogonal} basis functions $\varphi_1,\ldots,\varphi_N:\Omega\longrightarrow\RR$ that are uniquely determined by
\begin{align*}
	\varphi_{\zeta(k,T) \, \vert \, T} \circ \mv{M}_T \in \PP_{p_T-1}(\wh{T}) \quad
	   \forall \, k\in\ul{n_T}, \; \forall \, T\in\Th
	\qquad\text{and}\qquad
	(\phi_i,\varphi_j)_{0,\Omega} = \delta_{ij} \, (\phi_i, 1)_{0,\Omega} \quad
	   \forall \, i,j\in\ul{N}.
\end{align*}
Note that $\, \operatorname{supp}(\phi_i) = \operatorname{supp}(\varphi_i) \,$ for $i\in\ul{N}$ and both ($\phi_i$ and $\varphi_i$) form a basis of 
\begin{align*}
	W_{hp} := \Big\lbrace q_{hp}\in L^2(\Omega) \; ; \; q_{hp \, \vert \, T} \circ \mv{M}_T\in\PP_{p_T-1}(\wh{T}) \; \forall \, T\in\Th \Big\rbrace
\end{align*}
Thus,
\begin{align}\label{eq:repres_Qhp}
	Q_{hp} = \bigg\lbrace \sum_{i\in\ul{N}} \bs{q}_i \, \phi_i \; ; \; \bs{q}_i\in\SS_{d,0} \; \forall \, i\in\ul{N} \bigg\rbrace 
	= \bigg\lbrace \sum_{i\in\ul{N}} \bs{\mu}_i \, \varphi_i \; ; \; \bs{\mu}_i\in\SS_{d,0} \; \forall \, i\in\ul{N} \bigg\rbrace.
\end{align}


\textbf{Discretization of the Variational Inequality} -- To obtain an implementable approximation of $\psi(\cdot)$ we introduce the mesh depended quadrature rule $\Q_{hp}(\cdot) := \sum_{T\in\Th} \Q_{hp,T}(\cdot)$, where, for $T\in\Th$, the local quantities $\Q_{hp,T}(\cdot)$ are given by 
\begin{align*}
	\Q_{hp,T}(f) := \begin{cases}
		\abs{T} \, f\big(\mv{M}_T(\mv{0})\big), & \text{if } \, p_T = 1, \\
		\sum_{k=1}^{n_T} \omega_{k,T} \, | \det \nabla\mv{M}_T(\hat{\mv{x}}_{k,T}) | \, f\big( \mv{M}_T(\hat{\mv{x}}_{k,T}) \big), & \text{if } \, p_T \geq 2,
	\end{cases}
\end{align*}
$\abs{T}$ being the $d$-dimensional Lebesque-measure of $T$. Therewith, we let 
%
\begin{align*}
    \psi_{hp} : Q_{hp}\longrightarrow\RR, \qquad
    \psi_{hp}(\bs{q}_{hp}) := \Q_{hp}\big( \sigma_y \, \vert\bs{q}_{hp}\vert_F \big),
\end{align*}
cf.~\cite{ref:Bammer2026newtonSolver}. An approximation of \eqref{eq:viq2} therefore reads: \emph{Find a pair $(\mv{u}_{hp}, \bs{p}_{hp})\in\VV_{hp}$ such that}
\begin{align}\label{eq:discrete_viq2}
	a\big( (\mv{u}_{hp},\bs{p}_{hp}), (\mv{v}_{hp} - \mv{u}_{hp}, \bs{q}_{hp} - \bs{p}_{hp}) \big) + \psi_{hp}(\bs{q}_{hp}) - \psi_{hp}(\bs{p}_{hp}) \geq \ell(\mv{v}_{hp} - \mv{u}_{hp}) \qquad
	 \forall \, (\mv{v}_{hp},\bs{q}_{hp})\in\VV_{hp}.
\end{align}
%


\begin{lem}[Subdifferentiability of $\psi_{hp}(\cdot)$ -- {\cite[Lem.1, Lem.2]{ref:Bammer2025aposteriori}}]
	\quad
	\begin{enumerate}
		\item[\bf 1.] By representing $\bs{q}_{hp}\in Q_{hp}$ as $\, \bs{q}_{hp} = \sum_{i\in\ul{N}} \bs{q}_i \, \phi_i\, $, we find that
		\begin{align*}
			\psi_{hp}(\bs{q}_{hp}) = \sum_{i\in\ul{N}} \vert\bs{q}_i\vert_F \, (\sigma_y, \phi_i)_{0,\Omega}.
		\end{align*}				
		\item[\bf 2.] For any $\bs{\mu}_{hp}\in Q_{hp}$ there exists a $D\psi_{hp}(\bs{\mu}_{hp})$ in $Q_{hp}^*$, the dual space of $Q_{hp}$, such that
		\begin{align*}
			\psi_{hp}(\bs{q}_{hp}) \geq \psi_{hp}(\bs{\mu}_{hp}) + \langle D\psi_{hp}(\bs{\mu}_{hp}), \bs{q}_{hp} - \bs{\mu}_{hp} \rangle \qquad
			 \forall \, \bs{q}_{hp}\in Q_{hp}.
		\end{align*}
	\end{enumerate}
\end{lem}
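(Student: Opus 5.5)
For part~1 we work element by element. Fix $T\in\Th$ and write $\bs{q}_{hp|T} = \sum_{k\in\ul{n_T}} \bs{q}_{\zeta(k,T)}\,\phi_{\zeta(k,T)}$; only basis functions supported on $T$ contribute there.

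\emph{Case $p_T\geq 2$.} By construction $\wh{\phi}_{k,T}(\hat{\mv{x}}_{l,T})=\delta_{kl}$, so $\bs{q}_{hp}(\mv{M}_T(\hat{\mv{x}}_{k,T})) = \bs{q}_{\zeta(k,T)}$. Hence
\begin{align*}
	\Q_{hp,T}\big(\sigma_y\abs{\bs{q}_{hp}}_F\big) = \sigma_y\sum_{k\in\ul{n_T}} \omega_{k,T}\,\abs{\det\nabla\mv{M}_T(\hat{\mv{x}}_{k,T})}\,\abs{\bs{q}_{\zeta(k,T)}}_F .
\end{align*}
It remains to identify the weight with $(\sigma_y,\phi_{\zeta(k,T)})_{0,T} = \sigma_y\int_{\wh{T}} \wh{\phi}_{k,T}\,\abs{\det\nabla\mv{M}_T}\dd\hat{\mv{x}}$, using that $\sigma_y$ is constant. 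The Jacobian determinant of a bi/tri-linear map is a polynomial of degree at most one in each variable. Thus $\wh{\phi}_{k,T}\abs{\det\nabla\mv{M}_T}$ has degree at most $p_T$ in each variable; the sign of the determinant is constant since $T$ is convex and $\mv{M}_T$ is bijective. The tensor Gauss rule with $p_T$ points per direction is exact up to degree $2p_T-1\geq p_T$ in each variable. The integral therefore equals $\sum_l \omega_{l,T}\wh{\phi}_{k,T}(\hat{\mv{x}}_{l,T})\abs{\det\nabla\mv{M}_T(\hat{\mv{x}}_{l,T})} = \omega_{k,T}\abs{\det\nabla\mv{M}_T(\hat{\mv{x}}_{k,T})}$.

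\emph{Case $p_T=1$.} Here $n_T=1$ and $\wh{\phi}_{1,T}\equiv1$, so $\bs{q}_{hp|T}$ is the constant $\bs{q}_{\zeta(1,T)}$. Then $\Q_{hp,T}(\sigma_y\abs{\bs{q}_{hp}}_F) = \abs{T}\sigma_y\abs{\bs{q}_{\zeta(1,T)}}_F = (\sigma_y,\phi_{\zeta(1,T)})_{0,\Omega}\abs{\bs{q}_{\zeta(1,T)}}_F$.

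Summing over $T$ gives part~1. The main point requiring care is the exactness of Gauss quadrature for the Jacobian-weighted integrand; I expect it to hold by the degree count above.

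For part~2 we mimic the proof of Lemma~1 at the coefficient level. Let $\bs{\mu}_{hp}=\sum_i \bs{\mu}_i\phi_i$, set $w_i := (\sigma_y,\phi_i)_{0,\Omega} = \sigma_y\,\omega_{k,T}\abs{\det\nabla\mv{M}_T(\hat{\mv{x}}_{k,T})}$, and note that $w_i>0$ by part~1 (or $w_i=\sigma_y\abs{T}$ when $p_T=1$). Define $\bs{\eta}_i := \bs{\mu}_i/\abs{\bs{\mu}_i}_F$ if $\bs{\mu}_i\neq\bs{0}$ and $\bs{\eta}_i:=\bs{0}$ otherwise. The Cauchy--Schwarz inequality gives $\abs{\bs{q}_i}_F-\abs{\bs{\mu}_i}_F \geq \bs{\eta}_i:(\bs{q}_i-\bs{\mu}_i)$ for every $i$: when $\bs{\mu}_i=\bs{0}$ the left-hand side is $\abs{\bs{q}_i}_F\geq0$.

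We then define $D\psi_{hp}(\bs{\mu}_{hp})\in Q_{hp}^*$ by $\langle D\psi_{hp}(\bs{\mu}_{hp}),\bs{q}_{hp}\rangle := \sum_i w_i\,\bs{\eta}_i:\bs{q}_i$. This is linear and, since $Q_{hp}$ is finite dimensional, bounded. By part~1,
\begin{align*}
	\psi_{hp}(\bs{q}_{hp})-\psi_{hp}(\bs{\mu}_{hp}) = \sum_i w_i\big(\abs{\bs{q}_i}_F-\abs{\bs{\mu}_i}_F\big) \geq \langle D\psi_{hp}(\bs{\mu}_{hp}),\bs{q}_{hp}-\bs{\mu}_{hp}\rangle .
\end{align*}

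Optionally, biorthogonality yields a Riesz representative: $\wt{\bs{q}}_{hp} := \sum_i \sigma_y\bs{\eta}_i\varphi_i$ satisfies $(\wt{\bs{q}}_{hp},\bs{q}_{hp})_{0,\Omega} = \sum_i \sigma_y\bs{\eta}_i:\bs{q}_i\,(\phi_i,1)_{0,\Omega}$, which is the same functional.
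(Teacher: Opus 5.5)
Your route is the natural one and fits what the paper relies on. For part~1 you use that $\wh{\phi}_{k,T}$ is nodal at the Gauss points and that the tensor Gauss rule integrates $\wh{\phi}_{k,T}\abs{\det\nabla\mv{M}_T}$ exactly. For part~2 you use a coefficientwise Cauchy--Schwarz subgradient in the spirit of Lemma~1. Part~2 is correct as written; only $w_i\geq 0$ is needed. Your Riesz representative $\sum_i\sigma_y\bs{\eta}_i\varphi_i$ is consistent with $\sigma_i=\sigma_y$ in Theorem~\ref{thm:decoupling_lambdahp}.

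One justification in part~1 is false for $d=3$: the Jacobian determinant of a trilinear map is \emph{not} of degree at most one in each variable.
\begin{itemize}
\item The column $\partial_{\hat x_j}\mv{M}_T$ does not depend on $\hat x_j$ and is at most linear in each of the other variables. So the determinant can have degree $0+1+1=2$ in every variable.
\item This happens even for convex hexahedra with planar faces. For example, $\mv{M}(\hat{\mv{x}})=\big(\hat x_1,(1+\beta\hat x_1)\hat x_2,(1+\alpha\hat x_1)\hat x_3\big)$ with $0<\abs{\alpha},\abs{\beta}<1$ gives $\det\nabla\mv{M}=(1+\beta\hat x_1)(1+\alpha\hat x_1)$.
\item Only for $d=2$ do the mixed terms cancel, leaving an affine determinant.
\end{itemize}

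The step is repaired by the correct degree count. The integrand $\wh{\phi}_{k,T}\abs{\det\nabla\mv{M}_T}$ has degree at most $p_T+1$ in each variable. The $p_T$-point Gauss rule is exact up to degree $2p_T-1$ per variable, and $2p_T-1\geq p_T+1$ exactly when $p_T\geq 2$. That is precisely the case in which $\Q_{hp,T}$ uses the Gauss rule.

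Your degree claim would also have made the one-point Gauss rule exact for $p_T=1$, which fails in 3D. In the example, $\int_{\wh{T}}\det\nabla\mv{M}\dd\hat{\mv{x}}=8+\tfrac{8}{3}\alpha\beta\neq 8\det\nabla\mv{M}(\mv{0})$. This fits the paper defining $\Q_{hp,T}$ through $\abs{T}$ when $p_T=1$. It also fits its remark that \eqref{eq:mappingAssumpt} restricts the element shapes for $d=3$.

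With the corrected count, your proof is complete.
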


By using Lemma~2 it is shown in \cite{ref:Bammer2025aposteriori} that the discrete energy $\EE_{hp} : \VV_{hp}\longrightarrow\RR$ with
\begin{align*}
	\EE_{hp}(\mv{v}_{hp},\bs{q}_{hp}) := \frac{1}{2} \, a\big( (\mv{v}_{hp},\bs{q}_{hp}), (\mv{v}_{hp},\bs{q}_{hp}) \big) + \psi_{hp}(\bs{q}_{hp}) - \ell(\mv{v}_{hp})
\end{align*}
is coercive, convex and subdifferentiable so that the discrete minimization problem: \emph{Find a pair $(\mv{u}_{hp},\bs{p}_{hp})\in\VV_{hp}$ such that}
\begin{align*}
	\EE_{hp}(\mv{u}_{hp},\bs{p}_{hp}) \leq \EE_{hp}(\mv{v}_{hp},\bs{q}_{hp}) \qquad
	 \forall \, (\mv{v}_{hp},\bs{q}_{hp})\in\VV_{hp}
\end{align*}
has a unique solution by \cite{ref:Ekeland_1999}. Since that minimization problem is equivalent to the discrete variational inequality (similar as in Section~\ref{sec:WeakFormulations}) we find that \eqref{eq:discrete_viq2} has a unique solution $(\mv{u}_{hp},\bs{p}_{hp})\in\VV_{hp}$, see \cite[Thm.3]{ref:Bammer2025aposteriori}. \\


\textbf{Discretization of the Mixed Variational Formulation} -- To discretize $\Lambda$ we can either use the representation \eqref{eq:lambda_strong} \emph{or} \eqref{eq:lambda_weak}, in general, leading to two different dicretizations. Under a slight limitation on the mesh elements' shape, however, they are equivalent, see Section~\ref{sec:aprioriEst}. Discretizing \eqref{eq:lambda_weak} leads to the non-empty, convex and closed set
\begin{align*}
	\Lambda_{hp} := \Big\lbrace \bs{\mu}_{hp}\in Q_{hp} \; ; \; (\bs{\mu}_{hp}, \bs{q}_{hp})_{0,\Omega} \leq \psi_{hp}(\bs{q}_{hp}) \quad \forall \, \bs{q}_{hp}\in Q_{hp} \Big\rbrace.
\end{align*}

A discrete version of \eqref{eq:mixed_varF} thus reads: \emph{Find a triple $(\mv{u}_{hp}, \bs{p}_{hp}, \bs{\lambda}_{hp})\in\VV_{hp}\times \Lambda_{hp}$ s.t.}
\begin{subequations}\label{eq:discrete_MVF}
\begin{alignat}{2}
	a\big( (\mv{u}_{hp},\bs{p}_{hp}), (\mv{v}_{hp},\bs{q}_{hp}) \big) + (\bs{\lambda}_{hp},\bs{q}_{hp})_{0,\Omega} &= \ell(\mv{v}_{hp}) \qquad
	 &&\forall \, (\mv{v}_{hp},\bs{q}_{hp})\in\VV_{hp}, \\
	(\bs{\mu}_{hp} - \bs{\lambda}_{hp}, \bs{p}_{hp})_{0,\Omega} &\leq 0
	 &&\forall \, \bs{\mu}_{hp}\in\Lambda_{hp}. \label{eq:disc_MVF_inequalityConst}
\end{alignat}
\end{subequations}
The second part of the next theorem guarantees the existence and uniqueness of the first two components of a solution $(\mv{u}_{hp},\bs{p}_{hp},\bs{\lambda}_{hp})\in\VV_{hp}\times \Lambda_{hp}$ of \eqref{eq:discrete_MVF}. The remaining uniqueness of the discrete Lagrange multiplier follows from the \emph{discrete inf-sup condition}, stated in the first part of the theorem (and noting that $\Lambda_{hp}\subseteq Q_{hp}$).

\begin{theorem}[Equivalence of Discretizations -- {\cite[Lem.4]{ref:Bammer2025apriori}\cite[Thm.4]{ref:Bammer2025aposteriori}}]\label{thm:equivalence_discreteF}
	\quad
	\begin{enumerate}
		\item[\bf 1.] It holds that
		\begin{align*}
			\sup_{\substack{ (\mv{v}_{hp},\bs{q}_{hp})\in\VV_{hp} \\ \Vert(\mv{v}_{hp},\bs{q}_{hp}) \Vert \neq 0 }} \frac{(\bs{\mu}_{hp}, \bs{q}_{hp})_{0,\Omega}}{\Vert (\mv{v}_{hp},\bs{q}_{hp})\Vert }
			= \Vert \bs{\mu}_{hp} \Vert_{0,\Omega} \qquad
			 \forall \, \bs{\mu}_{hp}\in Q_{hp}.
		\end{align*}
		\item[\bf 2.] If $(\mv{u}_{hp},\bs{p}_{hp})\in\VV_{hp}$ solves \eqref{eq:discrete_viq2} then, $(\mv{u}_{hp},\bs{p}_{hp},\bs{\lambda}_{hp})$ with
		\begin{align}\label{eq:rep_discreteLM}
			\bs{\lambda}_{hp} := \P_{hp}\Bigl( \dev\big( \bs{\sigma}(\mv{u}_{hp},\bs{p}_{hp}) - \HH\, \bs{p}_{hp} \big) \Bigr)
		\end{align}
		is a solution of \eqref{eq:discrete_MVF}, where $\P_{hp} : Q\longrightarrow Q_{hp}$ is the standard $L^2$-projection operator. Conversely, if $(\mv{u}_{hp},\bs{p}_{hp},\bs{\lambda}_{hp})\in\VV_{hp}\times\Lambda_{hp}$ solves \eqref{eq:discrete_MVF} then, $(\mv{u}_{hp},\bs{p}_{hp})$ is a solution of \eqref{eq:discrete_viq2} and the identity \eqref{eq:rep_discreteLM} holds true.
	\end{enumerate}
\end{theorem}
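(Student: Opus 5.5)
For Part 1 we test with displacement-free directions. For $\bs{\mu}_{hp}\in Q_{hp}$ we bound the quotient from above using $\norm{(\mv{v}_{hp},\bs{q}_{hp})}\geq\norm{\bs{q}_{hp}}_{0,\Omega}$ and Cauchy--Schwarz, which gives at most $\norm{\bs{\mu}_{hp}}_{0,\Omega}$. For the reverse bound we take $(\mv{v}_{hp},\bs{q}_{hp}):=(\mv{0},\bs{\mu}_{hp})\in\VV_{hp}$ when $\bs{\mu}_{hp}\neq\bs{0}$. The quotient then equals $\norm{\bs{\mu}_{hp}}_{0,\Omega}^2/\norm{\bs{\mu}_{hp}}_{0,\Omega}$, so equality holds; the case $\bs{\mu}_{hp}=\bs{0}$ is trivial. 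This part is routine.

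For Part 2 the key tool is a characterisation of $\Lambda_{hp}$ in biorthogonal coordinates. Write $\bs{\mu}_{hp}=\sum_i\bs{\mu}_i\varphi_i$ and $\bs{q}_{hp}=\sum_i\bs{q}_i\phi_i$. By biorthogonality, $(\bs{\mu}_{hp},\bs{q}_{hp})_{0,\Omega}=\sum_i(\bs{\mu}_i:\bs{q}_i)(\phi_i,1)_{0,\Omega}$. By Lemma~2, $\psi_{hp}(\bs{q}_{hp})=\sum_i\abs{\bs{q}_i}_F(\sigma_y,\phi_i)_{0,\Omega}=\sigma_y\sum_i\abs{\bs{q}_i}_F(\phi_i,1)_{0,\Omega}$, using that $\sigma_y$ is constant. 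Note that $(\phi_i,1)_{0,\Omega}=\omega_{k,T}\abs{\det\nabla\mv{M}_T(\hat{\mv{x}}_{k,T})}>0$, since Gauss quadrature is exact on $\PP_{p_T-1}$ in each variable. (For affine maps this is immediate; for bilinear maps one may need the mesh assumption mentioned in the paper or treat the quadrature defining $\psi_{hp}$ consistently. I would simply use the identity of Lemma~2 together with positivity of $(\phi_i,1)_{0,\Omega}$.) Choosing $\bs{q}_{hp}$ with a single nonzero coefficient then yields
\begin{align*}
\Lambda_{hp}=\Bigl\{\sum_i\bs{\mu}_i\varphi_i \; ; \; \abs{\bs{\mu}_i}_F\le\sigma_y\ \ \forall\, i\in\ul{N}\Bigr\},
\end{align*}
which also gives $\psi_{hp}(\bs{q}_{hp})=\max_{\bs{\mu}_{hp}\in\Lambda_{hp}}(\bs{\mu}_{hp},\bs{q}_{hp})_{0,\Omega}$, the maximum attained at $\bs{\mu}_i=\sigma_y\bs{q}_i/\abs{\bs{q}_i}_F$.

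Forward direction. Let $(\mv{u}_{hp},\bs{p}_{hp})$ solve \eqref{eq:discrete_viq2}. Testing with $\bs{q}_{hp}=\bs{p}_{hp}$ and arbitrary $\mv{v}_{hp}$ gives the displacement equation. Define $\bs{\lambda}_{hp}$ by \eqref{eq:rep_discreteLM}. Since $Q_{hp}$ consists of trace-free symmetric fields, $(\bs{\lambda}_{hp},\bs{q}_{hp})_{0,\Omega}=(\bs{\sigma}-\HH\bs{p}_{hp},\bs{q}_{hp})_{0,\Omega}$, and the definition of $a$ then makes the first equation of \eqref{eq:discrete_MVF} hold identically. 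Substituting it into \eqref{eq:discrete_viq2} with $\mv{v}_{hp}=\mv{u}_{hp}$ gives
\begin{align*}
(\bs{\lambda}_{hp},\bs{q}_{hp}-\bs{p}_{hp})_{0,\Omega}\le\psi_{hp}(\bs{q}_{hp})-\psi_{hp}(\bs{p}_{hp})\qquad\forall\,\bs{q}_{hp}\in Q_{hp}.
\end{align*}
Taking $\bs{q}_{hp}=\bs{0}$ and $\bs{q}_{hp}=2\bs{p}_{hp}$, and using positive homogeneity of $\psi_{hp}$, yields $(\bs{\lambda}_{hp},\bs{p}_{hp})_{0,\Omega}=\psi_{hp}(\bs{p}_{hp})$. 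The inequality then reduces to $(\bs{\lambda}_{hp},\bs{q}_{hp})_{0,\Omega}\le\psi_{hp}(\bs{q}_{hp})$, i.e.\ $\bs{\lambda}_{hp}\in\Lambda_{hp}$. Finally, for any $\bs{\mu}_{hp}\in\Lambda_{hp}$ we have $(\bs{\mu}_{hp},\bs{p}_{hp})_{0,\Omega}\le\psi_{hp}(\bs{p}_{hp})=(\bs{\lambda}_{hp},\bs{p}_{hp})_{0,\Omega}$, which is \eqref{eq:disc_MVF_inequalityConst}.

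Converse. Let $(\mv{u}_{hp},\bs{p}_{hp},\bs{\lambda}_{hp})$ solve \eqref{eq:discrete_MVF}. Testing the first equation with $(\mv{0},\bs{q}_{hp})$ gives $(\bs{\lambda}_{hp},\bs{q}_{hp})_{0,\Omega}=(\bs{\sigma}-\HH\bs{p}_{hp},\bs{q}_{hp})_{0,\Omega}$ for all $\bs{q}_{hp}\in Q_{hp}$. Since $\bs{\lambda}_{hp}\in Q_{hp}$ and $Q_{hp}$ is trace-free, this is exactly \eqref{eq:rep_discreteLM}. By the max characterisation, \eqref{eq:disc_MVF_inequalityConst} yields $(\bs{\lambda}_{hp},\bs{p}_{hp})_{0,\Omega}=\psi_{hp}(\bs{p}_{hp})$, while $\bs{\lambda}_{hp}\in\Lambda_{hp}$ yields $(\bs{\lambda}_{hp},\bs{q}_{hp})_{0,\Omega}\le\psi_{hp}(\bs{q}_{hp})$. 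Testing the equation with $(\mv{v}_{hp}-\mv{u}_{hp},\bs{q}_{hp}-\bs{p}_{hp})$ and inserting these two facts gives \eqref{eq:discrete_viq2}.

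The main obstacle is the coefficientwise characterisation of $\Lambda_{hp}$, specifically establishing $(\sigma_y,\phi_i)_{0,\Omega}=\sigma_y(\phi_i,1)_{0,\Omega}>0$ and the decoupling under biorthogonality. The remaining steps are algebraic.
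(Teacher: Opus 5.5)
Your proof is correct and follows essentially the same route as the paper's cited argument. The forward direction tests the discrete variational inequality with $\bs{q}_{hp}=\bs{0}$ and $\bs{q}_{hp}=2\bs{p}_{hp}$; the converse uses $\psi_{hp}(\bs{q}_{hp})=\max_{\bs{\mu}_{hp}\in\Lambda_{hp}}(\bs{\mu}_{hp},\bs{q}_{hp})_{0,\Omega}$, which you get from the biorthogonal decoupling of $\Lambda_{hp}$ (exactly Theorem~\ref{thm:decoupling_lambdahp}, part~1, with $\sigma_i=\sigma_y$), mirroring the continuous argument before \eqref{eq:mixed_varF} in Section~\ref{sec:WeakFormulations}. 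Your hedge about possibly needing the mesh assumption is unnecessary. $D_i=(\phi_i,1)_{0,\Omega}>0$ holds on every mesh: for $p_T\geq 2$ the integrand $\wh{\phi}_{k,T}\,\det\nabla\mv{M}_T$ has degree at most $p_T+1\leq 2p_T-1$ in each variable even for trilinear $\mv{M}_T$, so Gauss quadrature is exact, and for $p_T=1$ one simply has $D_i=\abs{T}$. Assumption \eqref{eq:mappingAssumpt} is needed only for the pointwise representation in Theorem~\ref{prop:rep_lambdahp}.
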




\section{Semi-Smooth Newton Solver}\label{subsec:algebraic_repr}
\smallskip

To decouple the constraint in $\Lambda_{hp}$ and \eqref{eq:disc_MVF_inequalityConst}, we recall \eqref{eq:repres_Qhp} and define the quantities $D_i := (\phi_i,1)_{0,\Omega}\, $ and $\, \sigma_i := D_i^{-1} \, (\sigma_y, \phi_i)_{0,\Omega}\,$ for all $i\in\ul{N}$.
Since $\phi_1,\ldots,\phi_N$ represent the Gauss-Legendre-Lagrange basis functions we have $D_i > 0$ for all $i\in\ul{N}$ and as $\sigma_y>0$ it also holds that $\sigma_i >0$ for all $i\in\ul{N}$.

\begin{theorem}[Equivalent Representation of $\Lambda_{hp}$ -- {\cite[Thm.2]{ref:Bammer2022Icosahom}}]\label{thm:decoupling_lambdahp}
	\quad
	\begin{enumerate}
		\item[\bf 1.] The set $\Lambda_{hp}$ of admissible discrete Lagrange multipliers can be represented as
		\begin{align*} 
			\Lambda_{hp} = \bigg\lbrace \sum_{i\in\ul{N}} \bs{\mu}_i \, \varphi_i \; ; \; \bs{\mu}_i\in\SS_{d,0} \, \text{ and } \, \vert\bs{\mu}_i\vert_F \leq \sigma_i \quad \forall \, i\in\ul{N} \bigg\rbrace.
		\end{align*}
		\item[\bf 2.] By writing $\bs{p}_{hp}\in Q_{hp}$ and $\bs{\lambda}_{hp}\in\Lambda_{hp}$ as 
		\begin{align*}
			\bs{p}_{hp} = \sum_{i\in\ul{N}} \bs{p}_i \, \phi_i, \qquad
			\bs{\lambda}_{hp} = \sum_{i\in\ul{N}} \bs{\lambda}_i \, \varphi_i,
		\end{align*}				
		$\bs{\lambda}_{hp}$ satisfies \eqref{eq:disc_MVF_inequalityConst} if and only if $\,\bs{\lambda}_i : \bs{p}_i = \sigma_i \, \vert\bs{p}_i\vert_F \,$ for all $i\in\ul{N}$.
	\end{enumerate}
\end{theorem}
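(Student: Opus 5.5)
The whole argument rests on the biorthogonality relation $(\phi_i,\varphi_j)_{0,\Omega}=\delta_{ij}D_i$ together with Lemma~2, part~1. Write $\bs{\mu}_{hp}=\sum_{j}\bs{\mu}_j\varphi_j$ and $\bs{q}_{hp}=\sum_i\bs{q}_i\phi_i$ as in \eqref{eq:repres_Qhp}. Biorthogonality gives
\begin{align*}
	(\bs{\mu}_{hp},\bs{q}_{hp})_{0,\Omega}=\sum_{i\in\ul{N}} D_i\,\bs{\mu}_i:\bs{q}_i ,
\end{align*}
and Lemma~2 gives
\begin{align*}
	\psi_{hp}(\bs{q}_{hp})=\sum_{i\in\ul{N}}\abs{\bs{q}_i}_F(\sigma_y,\phi_i)_{0,\Omega}=\sum_{i\in\ul{N}}D_i\,\sigma_i\,\abs{\bs{q}_i}_F .
\end{align*}
Both the defining inequality of $\Lambda_{hp}$ and \eqref{eq:disc_MVF_inequalityConst} then become sums of decoupled terms indexed by $i$, and the proof reduces to pointwise statements in $\SS_{d,0}$.

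\textbf{Part 1.} First let $\abs{\bs{\mu}_i}_F\le\sigma_i$ for all $i$. Then Cauchy--Schwarz and $D_i>0$ give
\begin{align*}
	\sum_i D_i\,\bs{\mu}_i:\bs{q}_i\le\sum_i D_i\,\sigma_i\,\abs{\bs{q}_i}_F ,
\end{align*}
so $\bs{\mu}_{hp}\in\Lambda_{hp}$. Conversely, let $\bs{\mu}_{hp}\in\Lambda_{hp}$ and fix an index $k$. Test with $\bs{q}_{hp}=\bs{\mu}_k\phi_k$, which lies in $Q_{hp}$ since $\bs{\mu}_k\in\SS_{d,0}$. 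This yields $D_k\abs{\bs{\mu}_k}_F^2\le D_k\sigma_k\abs{\bs{\mu}_k}_F$. Dividing by $D_k>0$, and by $\abs{\bs{\mu}_k}_F$ when it is nonzero, gives $\abs{\bs{\mu}_k}_F\le\sigma_k$.

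\textbf{Part 2.} With the expansions of $\bs{p}_{hp}$ and $\bs{\lambda}_{hp}$, inequality \eqref{eq:disc_MVF_inequalityConst} reads
\begin{align*}
	\sum_i D_i\,(\bs{\mu}_i-\bs{\lambda}_i):\bs{p}_i\le 0 \qquad \text{for all admissible } (\bs{\mu}_i)_i \text{ as in Part~1}.
\end{align*}

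For the ``if'' direction, assume $\bs{\lambda}_i:\bs{p}_i=\sigma_i\abs{\bs{p}_i}_F$ for all $i$. Since $\bs{\mu}_i:\bs{p}_i\le\sigma_i\abs{\bs{p}_i}_F$ by Cauchy--Schwarz, each summand is nonpositive, so the sum is $\le 0$.

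For the ``only if'' direction, exploit that the admissible set from Part~1 is a product of balls. Fix $k$ and choose $\bs{\mu}_i=\bs{\lambda}_i$ for $i\ne k$. Set $\bs{\mu}_k=\sigma_k\bs{p}_k/\abs{\bs{p}_k}_F$ if $\bs{p}_k\neq\bs{0}$, and $\bs{\mu}_k=\bs{0}$ otherwise; this is admissible by Part~1. The inequality then gives $\sigma_k\abs{\bs{p}_k}_F\le\bs{\lambda}_k:\bs{p}_k$. On the other hand, $\bs{\lambda}_{hp}\in\Lambda_{hp}$ and Part~1 give $\abs{\bs{\lambda}_k}_F\le\sigma_k$, so Cauchy--Schwarz yields $\bs{\lambda}_k:\bs{p}_k\le\sigma_k\abs{\bs{p}_k}_F$. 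The two bounds give equality.

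\textbf{Main obstacle.} There is no deep obstacle here. The step needing care is checking that biorthogonality and Lemma~2 produce the same weights $D_i$ and $D_i\sigma_i$, so that the decoupling is exact. This uses the definition $\sigma_i=D_i^{-1}(\sigma_y,\phi_i)_{0,\Omega}$ and the positivity $D_i>0$ from the Gauss--Legendre structure. One should also verify that the test functions $\bs{\mu}_k\phi_k$ and the modified multipliers lie in $Q_{hp}$ with $\SS_{d,0}$-valued coefficients, which holds by \eqref{eq:repres_Qhp}.
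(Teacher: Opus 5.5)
Your proof is correct and follows essentially the route of the cited result; the paper itself gives no proof here. Biorthogonality and Lemma~2 turn both $(\boldsymbol{\mu}_{hp},\boldsymbol{q}_{hp})_{0,\Omega}$ and $\psi_{hp}(\boldsymbol{q}_{hp})$ into weighted coefficient sums $\sum_i D_i(\cdot)$, and your single-index test choices ($\boldsymbol{\mu}_k\phi_k$ in Part~1, changing only the $k$-th coefficient of $\boldsymbol{\lambda}_{hp}$ in Part~2), together with Cauchy--Schwarz, correctly settle the resulting decoupled conditions on $\Lambda_{hp}$ and \eqref{eq:disc_MVF_inequalityConst}.
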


\begin{coro}
	Let $(\mv{u}_{hp},\bs{p}_{hp},\bs{\lambda}_{hp})\in\VV_{hp}\times\Lambda_{hp}$ be the unique solution of \eqref{eq:discrete_MVF}. By representing $\bs{p}_{hp}$ and $\bs{\lambda}_{hp}$ as in Theorem~\ref{thm:decoupling_lambdahp}, for $i\in\ul{N}$, we find that:
	\begin{enumerate}
		\item[\emph{(i)}] If $\abs{\bs{\lambda}_i}_F < \sigma_i$, then $\bs{p}_i = \bs{0}$.
		\item[\emph{(ii)}] If $\abs{\bs{\lambda}_i}_F = \sigma_i$, then there exists a constant $c\geq 0$ such that $\bs{p}_i = c \, \bs{\lambda}_i$.
	\end{enumerate}
\end{coro}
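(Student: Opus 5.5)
The plan is to work purely componentwise, using Theorem~\ref{thm:decoupling_lambdahp}. Since $(\mv{u}_{hp},\bs{p}_{hp},\bs{\lambda}_{hp})$ solves \eqref{eq:discrete_MVF}, we have $\bs{\lambda}_{hp}\in\Lambda_{hp}$, and \eqref{eq:disc_MVF_inequalityConst} holds. Part~1 of Theorem~\ref{thm:decoupling_lambdahp} then gives $\abs{\bs{\lambda}_i}_F\le\sigma_i$ for every $i\in\ul{N}$. Part~2 gives the complementarity identity $\bs{\lambda}_i:\bs{p}_i=\sigma_i\abs{\bs{p}_i}_F$ for every $i\in\ul{N}$. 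Both claims should follow from this identity combined with the Cauchy--Schwarz inequality for the Frobenius inner product,
\begin{align*}
	\sigma_i\abs{\bs{p}_i}_F = \bs{\lambda}_i:\bs{p}_i \le \abs{\bs{\lambda}_i}_F\,\abs{\bs{p}_i}_F .
\end{align*}

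For (i), suppose $\abs{\bs{\lambda}_i}_F<\sigma_i$ and, for contradiction, $\bs{p}_i\neq\bs{0}$. Dividing the display by $\abs{\bs{p}_i}_F>0$ yields $\sigma_i\le\abs{\bs{\lambda}_i}_F$, which contradicts the assumption. Hence $\bs{p}_i=\bs{0}$.

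For (ii), suppose $\abs{\bs{\lambda}_i}_F=\sigma_i>0$. If $\bs{p}_i=\bs{0}$, we take $c=0$. Otherwise the display becomes $\bs{\lambda}_i:\bs{p}_i=\abs{\bs{\lambda}_i}_F\abs{\bs{p}_i}_F$, so equality holds in Cauchy--Schwarz with a nonnegative inner product. Equality in Cauchy--Schwarz for the inner product space $(\RR^{d\times d},:)$ with $\bs{\lambda}_i\neq\bs{0}$ forces $\bs{p}_i=c\,\bs{\lambda}_i$ for some $c\in\RR$. Substituting this back gives $c\,\abs{\bs{\lambda}_i}_F^2=\sigma_i\abs{\bs{p}_i}_F\ge0$, so $c\ge0$; explicitly, $c=\abs{\bs{p}_i}_F/\sigma_i$.

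The only step needing care is the equality case of Cauchy--Schwarz. To keep it self-contained, I would expand
\begin{align*}
	\abs{\sigma_i\bs{p}_i-\abs{\bs{p}_i}_F\bs{\lambda}_i}_F^2 = 2\sigma_i^2\abs{\bs{p}_i}_F^2-2\sigma_i\abs{\bs{p}_i}_F\,\bs{\lambda}_i:\bs{p}_i = 0,
\end{align*}
using $\abs{\bs{\lambda}_i}_F=\sigma_i$ and the complementarity identity. This directly gives $\bs{p}_i=\frac{\abs{\bs{p}_i}_F}{\sigma_i}\bs{\lambda}_i$, and it also covers the case $\bs{p}_i=\bs{0}$.
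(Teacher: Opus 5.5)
Your proof is correct and follows essentially the same route as the paper: the complementarity identity $\bs{\lambda}_i:\bs{p}_i=\sigma_i\abs{\bs{p}_i}_F$ from Theorem~\ref{thm:decoupling_lambdahp}, combined with the Cauchy--Schwarz inequality and its equality case. Your expansion of $\abs{\sigma_i\bs{p}_i-\abs{\bs{p}_i}_F\bs{\lambda}_i}_F^2$ additionally makes the sign $c=\abs{\bs{p}_i}_F/\sigma_i\geq 0$ explicit, which the paper only infers from linear dependence.
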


\begin{proof}
	Let $i\in\ul{N}$. By using Theorem~\ref{thm:decoupling_lambdahp} and the Cauchy-Schwarz inequality, we have
    \begin{align*}
        \sigma_i \, \vert\bs{p}_i\vert_F
		 = \bs{\lambda}_i : \bs{p}_i
		 \leq \vert\bs{\lambda}_i\vert_F \vert\bs{p}_i\vert_F
		 \leq \sigma_i \, \vert\bs{p}_i\vert_F,
    \end{align*}
which implies that $\, \bs{\lambda}_i : \bs{p}_i 
		 = \vert\bs{\lambda}_i\vert_F \vert\bs{p}_i\vert_F
		 = \sigma_i \, \vert\bs{p}_i\vert_F \,$ for all $i\in\ul{N}$.	
	If $\, \abs{\bs{\lambda}_i}_F < \sigma_i$, the second equality gives $\bs{p}_i = \bs{0}$ whereas if $\abs{\bs{\lambda}_i}_F = \sigma_i$, there exists a constant $c\geq 0$ such that $\bs{p}_i = c \, \bs{\lambda}_i$ since equality holds true in the Cauchy-Schwarz inequality if and only if the elements are linearly dependent.
\end{proof}

For $i\in\ul{N}$ and some $\varrho > 0$ we define the nonlinear functions $\bs{\chi}_{i,\varrho} : \SS_{d,0}\times\SS_{d,0}\longrightarrow\SS_{d,0}$
\begin{align}\label{eq:defi_chi_irho}
	\bs{\chi}_{i,\varrho}(\bs{q}_i,\bs{\mu}_i)
	 := \max \big\lbrace \sigma_i, \vert\bs{\mu}_i + \varrho\, \bs{q}_i\vert_F \big\rbrace \, \bs{\mu}_i - \sigma_i \, (\bs{\mu}_i + \varrho\, \bs{q}_i),
\end{align}
where $\bs{q}_i,\bs{\mu}_i$ are the coefficients of $\bs{q}_{hp}\in Q_{hp}$ and $\bs{\mu}_{hp}\in\Lambda_{hp}$ represented as 
\begin{align*}
	\bs{q}_{hp} = \sum_{i\in\ul{N}} \bs{q}_i \, \phi_i, \qquad
	\bs{\mu}_{hp} = \sum_{i\in\ul{N}} \bs{\mu}_i \, \varphi_i.
\end{align*}

\begin{theorem}[{\cite[Thm.3]{ref:Bammer2022Icosahom}}]\label{thm:ncp_function}
	Let $(\mv{u}_{hp},\bs{p}_{hp},\bs{\lambda}_{hp})\in\VV_{hp}\times\Lambda_{hp}$ be the solution of \eqref{eq:discrete_MVF} and $\bs{p}_{hp}, \bs{\lambda}_{hp}$ written as in Theorem~\ref{thm:decoupling_lambdahp}. Then, the discrete Lagrange multiplier $\bs{\lambda}_{hp}\in\Lambda_{hp}$ satisfies \eqref{eq:disc_MVF_inequalityConst} if and only if
	\begin{align*}
		\bs{\chi}_{i,\varrho}(\bs{p}_i,\bs{\lambda}_i) = \bs{0} \qquad
		 \forall \, i\in\ul{N}.
	\end{align*}		
\end{theorem}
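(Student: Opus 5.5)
By Theorem~\ref{thm:decoupling_lambdahp}, it suffices to show, for each fixed $i\in\ul{N}$ and each $\varrho>0$, that $\bs{\chi}_{i,\varrho}(\bs{p}_i,\bs{\lambda}_i)=\bs{0}$ holds if and only if both $\vert\bs{\lambda}_i\vert_F\le\sigma_i$ and $\bs{\lambda}_i:\bs{p}_i=\sigma_i\vert\bs{p}_i\vert_F$ hold. The first condition is membership in $\Lambda_{hp}$ by part~1 of that theorem. The second is equivalent to \eqref{eq:disc_MVF_inequalityConst} by part~2. Since the problem decouples in $i$, everything reduces to an elementwise statement in $\SS_{d,0}$ with $\sigma_i>0$ fixed. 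I will abbreviate $\bs{z}:=\bs{\lambda}_i+\varrho\,\bs{p}_i$.

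\emph{Forward direction.} Assume $\vert\bs{\lambda}_i\vert_F\le\sigma_i$ and $\bs{\lambda}_i:\bs{p}_i=\sigma_i\vert\bs{p}_i\vert_F$. As in the proof of the preceding Corollary, there are two cases.
\begin{enumerate}
\item[(a)] $\bs{p}_i=\bs{0}$. Then $\bs{z}=\bs{\lambda}_i$ and $\vert\bs{z}\vert_F\le\sigma_i$, so the maximum equals $\sigma_i$ and $\bs{\chi}_{i,\varrho}=\sigma_i\bs{\lambda}_i-\sigma_i\bs{\lambda}_i=\bs{0}$.
\item[(b)] $\bs{p}_i\neq\bs{0}$. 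Equality in Cauchy--Schwarz forces $\vert\bs{\lambda}_i\vert_F=\sigma_i$ and $\bs{p}_i=c\,\bs{\lambda}_i$ with $c>0$. Then $\bs{z}=(1+\varrho c)\bs{\lambda}_i$, so $\vert\bs{z}\vert_F=(1+\varrho c)\sigma_i>\sigma_i$, and
\begin{align*}
\bs{\chi}_{i,\varrho}=(1+\varrho c)\sigma_i\bs{\lambda}_i-\sigma_i(1+\varrho c)\bs{\lambda}_i=\bs{0}.
\end{align*}
\end{enumerate}

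\emph{Converse direction.} Assume $\bs{\chi}_{i,\varrho}(\bs{p}_i,\bs{\lambda}_i)=\bs{0}$, i.e.\ $m\,\bs{\lambda}_i=\sigma_i\bs{z}$ with $m:=\max\{\sigma_i,\vert\bs{z}\vert_F\}$.
\begin{enumerate}
\item[(a)] If $\vert\bs{z}\vert_F\le\sigma_i$, then $m=\sigma_i$, so $\bs{\lambda}_i=\bs{z}$. This gives $\varrho\,\bs{p}_i=\bs{0}$, hence $\bs{p}_i=\bs{0}$ because $\varrho>0$. Moreover $\vert\bs{\lambda}_i\vert_F=\vert\bs{z}\vert_F\le\sigma_i$, and the complementarity identity holds trivially.
\item[(b)] If $\vert\bs{z}\vert_F>\sigma_i$, then $\bs{\lambda}_i=\sigma_i\bs{z}/\vert\bs{z}\vert_F$, so $\vert\bs{\lambda}_i\vert_F=\sigma_i$. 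Also
\begin{align*}
\varrho\,\bs{p}_i=\bs{z}-\bs{\lambda}_i=\Bigl(\tfrac{\vert\bs{z}\vert_F}{\sigma_i}-1\Bigr)\bs{\lambda}_i,
\end{align*}
which is a nonnegative multiple of $\bs{\lambda}_i$. Therefore $\bs{\lambda}_i:\bs{p}_i=\vert\bs{\lambda}_i\vert_F\vert\bs{p}_i\vert_F=\sigma_i\vert\bs{p}_i\vert_F$.
\end{enumerate}

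Combining both directions over all $i$ with Theorem~\ref{thm:decoupling_lambdahp} yields the claim. The one point needing care is a subtlety of the hypothesis: $\bs{\lambda}_{hp}\in\Lambda_{hp}$ is assumed, so the bound $\vert\bs{\lambda}_i\vert_F\le\sigma_i$ is available in the forward direction, and the converse recovers it anyway. The main computational step is case (b) of the converse, where the scalar factor $\vert\bs{z}\vert_F/\sigma_i-1$ must be seen to be positive; this follows from $\vert\bs{z}\vert_F>\sigma_i$ together with $\sigma_i>0$ and $\varrho>0$.
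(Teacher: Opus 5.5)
Your proof is correct and follows essentially the same route as the cited argument. You reduce via Theorem~\ref{thm:decoupling_lambdahp} to the coefficientwise conditions $\vert\bs{\lambda}_i\vert_F\le\sigma_i$ and $\bs{\lambda}_i:\bs{p}_i=\sigma_i\vert\bs{p}_i\vert_F$, then split according to whether $\vert\bs{\lambda}_i+\varrho\,\bs{p}_i\vert_F\le\sigma_i$, using the equality case of Cauchy--Schwarz exactly as in the preceding Corollary. Note also that you prove the equivalence for arbitrary $\bs{p}_{hp}\in Q_{hp}$ and $\bs{\lambda}_{hp}$, even recovering $\bs{\lambda}_{hp}\in\Lambda_{hp}$ in the converse; this is what the reformulation as $\mv{F}=\mv{0}$ actually needs, whereas under the stated hypothesis that the triple already solves \eqref{eq:discrete_MVF} one direction is formally automatic.
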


Theorem~\ref{thm:ncp_function} allows to rewrite \eqref{eq:discrete_MVF} as a system of decoupled nonlinear equations. To this end, we first choose an orthonormal basis $\mv{B}_{\SS} = \lbrace \bs{\Phi}_1,\ldots,\bs{\Phi}_L\rbrace$ of $\SS_{d,0}$ (with respect to the Frobenius inner product), where $L := \frac{1}{2} \, (d-1) (d+2)$: If $d = 2$, we take 
\begin{align*}
	\bs{\Phi}_1 := \frac{1}{\sqrt{2}} \, \left( \begin{matrix} 1 & 0 \\ 0 & -1 \end{matrix} \right), \qquad
	\bs{\Phi}_2 := \frac{1}{\sqrt{2}} \, \left( \begin{matrix} 0 & 1 \\ 1 & 0 \end{matrix} \right)
\end{align*}
and if $d = 3$, we take 
\begin{align*}
	\bs{\Phi}_1 
            &:= \frac{1}{\sqrt{2}}  \, \left(\begin{matrix} 1 & 0 & 0 \\ 0 & -1 & 0 \\ 0 & 0 & 0 \end{matrix}\right), \qquad
    \bs{\Phi}_2 
            := \frac{1}{\sqrt{6}} \, \left(\begin{matrix} 1 & 0 & 0 \\ 0 & 1 & 0 \\ 0 & 0 & -2 \end{matrix}\right), \qquad
    \bs{\Phi}_3 
            := \frac{1}{\sqrt{2}} \, \left(\begin{matrix} 0 & 1 & 0 \\ 1 & 0 & 0 \\ 0 & 0 & 0 \end{matrix}\right), \\
    \bs{\Phi}_4 
            &:= \frac{1}{\sqrt{2}} \, \left(\begin{matrix} 0 & 0 & 1 \\ 0 & 0 & 0 \\ 1 & 0 & 0 \end{matrix}\right), \qquad 
    \bs{\Phi}_5 
            := \frac{1}{\sqrt{2}} \, \left(\begin{matrix} 0 & 0 & 0 \\ 0 & 0 & 1 \\ 0 & 1 & 0 \end{matrix}\right).
\end{align*}

Furthermore, let $\,\vartheta_1,\ldots,\vartheta_M : \Omega\longrightarrow\RR\,$ be such that $\, \mv{B}_{V} := \big\lbrace \mv{e}_k \, \vartheta_i \; ; \; (k,i)\in\ul{d}\times\ul{M} \big\rbrace$ forms a basis of $V_{hp}$. Then, by representing $(\mv{v}_{hp},\bs{q}_{hp},\bs{\mu}_{hp})\in\VV_{hp}\times\Lambda_{hp}$ as 
\begin{align*}
	\mv{v}_{hp} &= \sum_{i\in\ul{M}} \sum_{k\in\ul{d}} v_{d(i-1)+k} \, \mv{e}_k \, \vartheta_i, \qquad
	\bs{q}_{hp} = \sum_{i\in\ul{N}} \sum_{k\in\ul{L}} q_{L(i-1)+k} \, \bs{\Phi}_k \, \phi_i, \\
	\bs{\mu}_{hp} &= \sum_{i\in\ul{N}} \sum_{k\in\ul{L}} \mu_{L(i-1)+k} \, \bs{\Phi}_k \, \varphi_i
\end{align*}
any $(\mv{v}_{hp},\bs{q}_{hp},\bs{\mu}_{hp})\in\VV_{hp}\times\Lambda_{hp}$ is completely determined by the coefficient vectors 
%
\begin{align*}
	\mv{a} := (v_1,\ldots,v_{dM})^{\top}\in\RR^{dM}, \qquad
	\mv{b} := (q_1,\ldots,q_{LN})^{\top}, \,
	\mv{c} := (\mu_1,\ldots,\mu_{LN})^{\top}\in\RR^{LN}.
\end{align*}
The coefficient vectors of the discrete solution of \eqref{eq:discrete_MVF} we denote by $\mv{a}^{\ast}$, $\mv{b}^{\ast}$ and $\mv{c}^{\ast}$. By representing $\bs{\chi}_{i,\varrho}(\bs{q}_i,\bs{\mu}_i)$ for all $i\in\ul{N}$ as $\, \bs{\chi}_{i,\varrho}(\bs{q}_i,\bs{\mu}_i) = \sum_{k\in\ul{L}} \chi_{i,k} \, \bs{\Phi}_k\,$ it is pointed out in \cite{ref:Bammer2026newtonSolver} that the coefficients $\chi_{i,1},\ldots,\chi_{i,L}$ result in 
\begin{align*}
	\chi_{i,l} = \max\big\lbrace \sigma_i, \abs{\mv{c}_i+\varrho \, \mv{b}_i} \big\rbrace \, \mu_{L(i-1)+l} - \sigma_i \, \big( \mu_{L(i-1)+l} + \varrho \, q_{L(i-1)+l} \big)
\end{align*}
with $\mv{b}_i,\mv{c}_i\in\RR^L$ given by $\mv{b}_i := (q_{L(i-1)+1},\ldots,q_{Li})^{\top}$ and $\mv{c}_i := (\mu_{L(i-1)+1},\ldots,\mu_{Li})^{\top}$ for $i\in\ul{N}$. Next, we define 
\begin{align*}
	\mv{S}_{i,\varrho} : \RR^L\times \RR^L\longrightarrow\RR^L, \qquad
	\mv{S}_{i,\varrho}(\mv{b}_i,\mv{c}_i) := (\chi_{i,1},\ldots,\chi_{i,L})^{\top}
	 \qquad\forall \, i\in\ul{N}
\end{align*}
and let $\bs{A}\in\RR^{dM\times dM}$, $\bs{C},\bs{D}\in\RR^{LN\times LN}$, $\bs{B}\in\RR^{dM\times LN}$ and $\mv{l}\in\RR^{dM}$ be given by
\begin{alignat*}{2}
	A_{d(i-1)+k, \, d(j-1)+l} 
     &:= a\big( (\mv{e}_l \, \vartheta_j,\mathbf{0}) , (\mv{e}_k \, \vartheta_i,\mathbf{0}) \big)
     \qquad &&\forall \, (i,j,k,l)\in\ul{M}^2 \times\ul{d}^2, \\
	C_{L(i-1)+k, \, L(j-1)+l} 
     &:= a\big( (\mv{0},\boldsymbol{\Phi}_l \, \phi_i) , (\mv{0},\boldsymbol{\Phi}_k \, \phi_j) \big)
     \qquad &&\forall \, (i,j,k,l)\in\ul{N}^2 \times\in\ul{L}^2, \\
	D_{L(i-1)+k, \, L(j-1)+l} 
     &:= \delta_{lk} \, \delta_{ij} \, D_{i}
     \qquad &&\forall \, (i,j,k,l)\in\ul{N}^2 \times\in\ul{L}^2, \\
	B_{d(i-1)+k, \, L(j-1)+l} 
     &:= -a\big( (\mv{o},\bs{\Phi}_l \, \phi_j) , (\mv{e}_k \, \vartheta_i, \mathbf{0}) \big)
     \qquad &&\forall \, (i,j,k,l)\in\ul{N}\times\ul{M}\times\ul{d}\times\ul{L}, \\
	l_{d(i-1)+k} 
	 &:= -\ell(\mv{e}_k \, \vartheta_i)
	 \qquad &&\forall \, (i,k)\in\ul{M}\times\ul{d}.
\end{alignat*}
Note that the quadratic matrices $\bs{A}$ and $\bs{C}$ are symmetric and positive definite and $\bs{D}$ is a positive definite diagonal matrix. By defining the affine linear function
\begin{align*}
	\mv{L} : \RR^{dM}\times\RR^{LN}\times\RR^{LN}\longrightarrow\RR^K, \qquad
	\mv{L}(\mv{a},\mv{b},\mv{c}) := \left(\begin{matrix}
	 	\bs{A} & \bs{B} & \bs{0} \\
	 	\bs{B}^{\top} & \bs{C} & \bs{D}
	 \end{matrix}\right) \left(\begin{matrix}
	 	\mv{a} \\ \mv{b} \\ \mv{c}
	 \end{matrix}\right) + \left(\begin{matrix}
	 	\mv{l} \\ \mv{0}
	 \end{matrix}\right)
\end{align*}
with $K:= dM + LN$ we finally introduce the (non-smooth) function
\begin{align*}
\mv{F} : \RR^{dM}\times\RR^{LN}\times\RR^{LN}\longrightarrow\RR^{dM + 2 \, LN}, \qquad
    \mv{F}(\mv{a},\mv{b},\mv{c}) := \left(\begin{smallmatrix}
		\mv{L}(\mv{a},\mv{b},\mv{c}) \\
		\mv{S}_{1,\varrho}(\mv{b}_1,\mv{c}_1) \\
		 \vdots \\
		\mv{S}_{N,\varrho}(\mv{b}_N,\mv{c}_N)
	\end{smallmatrix}\right).
\end{align*}

Therewith, \eqref{eq:discrete_MVF} becomes equivalent to the nonlinear problem: \emph{Find $\mv{a}^{\ast}\in\RR^{dM}$ and $\mv{b}^{\ast},\mv{c}^{\ast}\in\RR^{LN}$ such that}
\begin{align}\label{eq:myNonlinearP}
	\mv{F}(\mv{a}^{\ast},\mv{b}^{\ast},\mv{c}^{\ast}) = \mv{0},
\end{align}
see \cite{ref:Bammer2022Icosahom, ref:Bammer2025apriori, ref:Bammer2026newtonSolver}. 
Since \eqref{eq:discrete_MVF} has a unique solution $(\mv{u}_{hp},\bs{p}_{hp},\bs{\lambda}_{hp})\in\VV_{hp}\times\Lambda_{hp}$ the nonlinear problem \eqref{eq:myNonlinearP} is uniquely solvable as well. By Lemma~1 in \cite{ref:Bammer2026newtonSolver}, $\mv{F}$ is Lipschitz-continuous and semismooth. To solve \eqref{eq:myNonlinearP} numerically, we introduce the semismooth Newton solver, Algorithm~\ref{eq:SSN_iterates}, where $\partial\mv{F}$ denotes the Clarke subdifferential of $\mv{F}$. This solver is analysed in \cite{ref:Bammer2026newtonSolver} and the numerical examples therein show its applicability and robustness of the number of iterations on the finite element spaces and $\varrho$, cf.~\eqref{eq:defi_chi_irho}.

%
\begin{algorithm}
\caption{Semismooth Newton Solver}\label{eq:SSN_iterates}
	\begin{algorithmic}[1]
    \Procedure{NewtonSolver} {$\mv{a}^{(0)},\mv{b}^{(0)},\mv{c}^{(0)}, \text{tol}$} \vspace{0.1cm}
    	\State $k \gets 0$ \vspace{0.1cm}
        \While{$\abs{\mv{F}\big(\mv{a}^{(k)},\mv{b}^{(k)},\mv{c}^{(k)}\big)} > \text{tol}$} \vspace{0.1cm}
        \State Choose $\bs{H}_k\in\partial\mv{F}\big( \mv{a}^{(k)}, \mv{b}^{(k)}, \mv{c}^{(k)}\big)$
        \State Solve 
        $\bs{H}_k \big( \Delta\mv{a}^{(k)}, \Delta\mv{b}^{(k)}, \Delta\mv{c}^{(k)} \big)^{\top} 
         = - \mv{F}\big( \mv{a}^{(k)}, \mv{b}^{(k)}, \mv{c}^{(k)} \big)$
        \State Set 
        $\big( \mv{a}^{(k+1)}, \mv{b}^{(k+1)}, \mv{c}^{(k+1)} \big)^{\top}
         = \big( \mv{a}^{(k)}, \mv{b}^{(k)}, \mv{c}^{(k)} \big)^{\top} + \big( \Delta\mv{a}^{(k)}, \Delta\mv{b}^{(k)}, \Delta\mv{c}^{(k)} \big)^{\top}$ \vspace{0.1cm}
        \State $k\gets k+1$ \vspace{0.1cm}
        \EndWhile \vspace{0.1cm}
        \State \textbf{return} $\big(\mv{a}^{(k)},\mv{b}^{(k)},\mv{c}^{(k)}\big)$ \vspace{0.1cm}
    \EndProcedure
	\end{algorithmic}
\end{algorithm}
\vspace{-0.25cm}

\begin{theorem}[Superlinear Convergence -- {\cite[Thm.8]{ref:Bammer2026newtonSolver}}]
	Algorithm~\ref{eq:SSN_iterates} is well-defined in a neighbourhood of the solution of $\,\mv{F}(\mv{a}^{\ast},\mv{b}^{\ast},\mv{c}^{\ast}) = \mv{0}\, $ and converges locally superlinear.
\end{theorem}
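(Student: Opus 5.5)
The plan is to invoke the standard local convergence theory for semismooth Newton methods (Qi--Sun): if $\mv{F}$ is locally Lipschitz and semismooth at a solution $\mv{z}^{\ast} := (\mv{a}^{\ast},\mv{b}^{\ast},\mv{c}^{\ast})$, and every $\bs{H}\in\partial\mv{F}(\mv{z}^{\ast})$ is nonsingular (CD-regularity), then the iteration is well defined in a neighbourhood of $\mv{z}^{\ast}$ and converges superlinearly. Lipschitz continuity and semismoothness of $\mv{F}$ were already noted above (via \cite[Lem.1]{ref:Bammer2026newtonSolver}). Indeed, $\mv{L}$ is affine, and each $\mv{S}_{i,\varrho}$ is built from $\max\{\sigma_i,\abs{\cdot}\}$ composed with affine maps and multiplied by a linear term; the Euclidean norm is semismooth, $\max$ is piecewise smooth, and semismoothness is preserved under products and compositions. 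So the real work is CD-regularity at $\mv{z}^{\ast}$. Upper semicontinuity of $\partial\mv{F}$ then gives a uniform bound on $\norm{\bs{H}^{-1}}$ for $\bs{H}\in\partial\mv{F}(\mv{z})$ with $\mv{z}$ near $\mv{z}^{\ast}$, and hence well-definedness of Algorithm~\ref{eq:SSN_iterates}.

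To prove CD-regularity, I will describe the generalized Jacobians blockwise. For each $i$ there are three cases.
\begin{enumerate}
\item[(a)] Inactive, $\abs{\mv{c}_i+\varrho\mv{b}_i}<\sigma_i$: here $\mv{S}_{i,\varrho}=-\sigma_i\varrho\,\mv{b}_i$ locally, so the block row forces $\Delta\mv{b}_i=\mv{0}$.
\item[(b)] Strictly active, $\abs{\mv{c}_i+\varrho\mv{b}_i}>\sigma_i$: $\mv{S}_{i,\varrho}$ is smooth and its derivative is explicit in terms of $\mv{w}_i:=\mv{c}_i+\varrho\mv{b}_i$.
\item[(c)] Degenerate, $\abs{\mv{w}_i}=\sigma_i$: the Clarke Jacobian is a convex combination of the two one-sided derivatives, with a parameter $t\in[0,1]$ in front of the rank-one term $\mv{c}_i\mv{w}_i^{\top}/\abs{\mv{w}_i}$.
\end{enumerate}

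Now suppose $\bs{H}(\Delta\mv{a},\Delta\mv{b},\Delta\mv{c})^{\top}=\mv{0}$. Multiply the linear block by $(\Delta\mv{a},\Delta\mv{b})$ and use that the matrix $\bigl(\begin{smallmatrix}\bs{A}&\bs{B}\\ \bs{B}^{\top}&\bs{C}\end{smallmatrix}\bigr)$ represents the $\VV_{hp}$-elliptic form $a(\cdot,\cdot)$. This gives
\begin{align*}
0 = a\bigl((\Delta\mv{u},\Delta\bs{p}),(\Delta\mv{u},\Delta\bs{p})\bigr) + \Delta\mv{b}^{\top}\bs{D}\,\Delta\mv{c},
\end{align*}
where $\bs{D}$ is diagonal and positive.

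The main obstacle is showing $\Delta\mv{b}_i^{\top}\Delta\mv{c}_i\ge 0$ for every $i$ from the semismooth block rows, including the degenerate case (c). In case (a) this is trivial. In case (b), at the solution we have $\mv{c}_i=\sigma_i\mv{w}_i/\abs{\mv{w}_i}$ by the Corollary, so $\abs{\mv{c}_i}=\sigma_i$. The block row then reads $(\abs{\mv{w}_i}-\sigma_i)\Delta\mv{c}_i+\mv{c}_i\,(\mv{w}_i^{\top}\Delta\mv{w}_i)/\abs{\mv{w}_i}-\sigma_i\varrho\Delta\mv{b}_i=\mv{0}$ with $\Delta\mv{w}_i=\Delta\mv{c}_i+\varrho\Delta\mv{b}_i$. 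I plan to decompose $\Delta\mv{b}_i$ and $\Delta\mv{c}_i$ into components parallel and orthogonal to $\mv{c}_i$. The goal is to show that the parallel part of $\Delta\mv{c}_i$ vanishes and that the orthogonal parts are related by a positive multiple, which yields $\Delta\mv{b}_i^{\top}\Delta\mv{c}_i\ge0$. Case (c) should follow the same way with the factor $t$, since then $\abs{\mv{w}_i}=\sigma_i$ and the orthogonal parts decouple.

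With these signs, ellipticity of $a$ gives $\Delta\mv{u}=\mv{0}$ and $\Delta\bs{p}=\bs{0}$, i.e.\ $\Delta\mv{a}=\mv{0}$ and $\Delta\mv{b}=\mv{0}$. The second block row then gives $\bs{D}\Delta\mv{c}=\mv{0}$, so $\Delta\mv{c}=\mv{0}$ because $\bs{D}$ is invertible; this is the algebraic counterpart of the inf-sup condition in Theorem~\ref{thm:equivalence_discreteF}. Hence every $\bs{H}\in\partial\mv{F}(\mv{z}^{\ast})$ is nonsingular, and the Qi--Sun theorem yields the claim.
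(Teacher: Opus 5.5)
Your proposal is correct and follows the route the paper points to: the paper takes Lipschitz continuity and semismoothness of $\mv{F}$ from \cite[Lem.~1]{ref:Bammer2026newtonSolver}, and adding nonsingularity of every $\bs{H}\in\partial\mv{F}(\mv{a}^{\ast},\mv{b}^{\ast},\mv{c}^{\ast})$ lets the Qi--Sun local convergence theorem give well-definedness and superlinear convergence. The sign argument you sketch does close, writing $\beta_i,\gamma_i$ for the components of $\Delta\mv{b}_i,\Delta\mv{c}_i$ along $\mv{c}_i/\sigma_i$: in the strictly active case one gets $\gamma_i=0$ and $\Delta\mv{c}_i^{\perp}=\sigma_i\varrho\,(\abs{\mv{w}_i}-\sigma_i)^{-1}\Delta\mv{b}_i^{\perp}$, while in the degenerate case $\mv{c}_i=\mv{w}_i$ at the solution, so the block row forces $\Delta\mv{b}_i^{\perp}=\mv{0}$ and $t\gamma_i=(1-t)\varrho\beta_i$, hence $\beta_i\gamma_i\ge 0$.
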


\medskip


\section{A Priori and A Posteriori Error Analysis}\label{sec:apriori_analysis}\label{sec:aprioriEst}
\smallskip

\textbf{A Priori Analysis} -- For the a priori analysis we make the additional assumption
\begin{align}\label{eq:mappingAssumpt}
	\det\nabla\mv{M}_T\in\PP_1(\wh{T})
	 \qquad\forall \, T\in\Th \, \text{ with } \, p_T\geq 2,
\end{align}
which is no restriction for $d=2$ but slightly limits the mesh elements' shape for $d=3$.

\begin{theorem}[{\cite[Thm.7]{ref:Bammer2025apriori}}]\label{prop:rep_lambdahp}
	Under the assumption \eqref{eq:mappingAssumpt}, $\Lambda_{hp}$ can be represented as 
	\begin{align*}
		\Lambda_{hp} = \Big\lbrace \bs{\mu}_{hp}\in Q_{hp} \; ; \; \vert\bs{\mu}_{hp}(\mv{M}_T(\hat{\mv{x}}_T))\vert_F \leq \sigma_y \; \forall \, k\in\ul{n_T}, \; \forall \, T\in\Th \Big\rbrace
	\end{align*}		
\end{theorem}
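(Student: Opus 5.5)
The plan is to reduce the claim to Theorem~\ref{thm:decoupling_lambdahp}, part~1, which already gives
\begin{align*}
	\Lambda_{hp} = \Big\lbrace \sum_{i\in\ul{N}} \bs{\mu}_i \, \varphi_i \; ; \; \bs{\mu}_i\in\SS_{d,0}, \ \vert\bs{\mu}_i\vert_F \leq \sigma_i \ \forall \, i\in\ul{N} \Big\rbrace .
\end{align*}
It then remains to show two things under \eqref{eq:mappingAssumpt}: the biorthogonal basis coincides with the Gauss--Lagrange basis, i.e.\ $\varphi_i = \phi_i$, and $\sigma_i = \sigma_y$ for all $i$. Once both hold, the coefficient $\bs{\mu}_{\zeta(k,T)}$ of $\bs{\mu}_{hp} = \sum_i \bs{\mu}_i \varphi_i$ equals the nodal value $\bs{\mu}_{hp}(\mv{M}_T(\hat{\mv{x}}_{k,T}))$, because $\phi_i$ is the Lagrange basis for these points. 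The coefficient constraint is then exactly the pointwise constraint in the statement. (I read the statement's $\hat{\mv{x}}_T$ as $\hat{\mv{x}}_{k,T}$.)

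\emph{Step 1 (diagonal mass matrix).} Fix $T$ with $p_T\geq 2$ and $k,l\in\ul{n_T}$. Transform to $\wh{T}$:
\begin{align*}
	(\phi_{\zeta(k,T)},\phi_{\zeta(l,T)})_{0,\Omega} = \int_{\wh{T}} \wh{\phi}_{k,T}\,\wh{\phi}_{l,T}\,\vert\det\nabla\mv{M}_T\vert \dd\hat{\mv{x}} .
\end{align*}
Convexity of $T$ gives $\det\nabla\mv{M}_T>0$, so the absolute value can be dropped. By \eqref{eq:mappingAssumpt} and the tensor-product structure, the integrand has degree at most $2(p_T-1)+1 = 2p_T-1$ in each variable. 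The tensor Gauss rule with $p_T$ points per direction integrates such polynomials exactly. Evaluating at the nodes therefore gives
\begin{align*}
	(\phi_{\zeta(k,T)},\phi_{\zeta(l,T)})_{0,\Omega} = \delta_{kl}\,\omega_{k,T}\det\nabla\mv{M}_T(\hat{\mv{x}}_{k,T}) .
\end{align*}
The same exactness argument, now with degree $p_T$, gives $D_{\zeta(k,T)} = (\phi_{\zeta(k,T)},1)_{0,\Omega} = \omega_{k,T}\det\nabla\mv{M}_T(\hat{\mv{x}}_{k,T})$. Functions on different elements are trivially orthogonal. Hence $(\phi_i,\phi_j)_{0,\Omega} = \delta_{ij}(\phi_i,1)_{0,\Omega}$, so $\phi_i$ satisfies the defining conditions of $\varphi_i$, and the uniqueness of the biorthogonal basis yields $\varphi_i = \phi_i$. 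The case $p_T = 1$ is trivial: there is one basis function, equal to $1$ on $T$, and the single node $\mv{M}_T(\mv{0})$ is the point used by $\Q_{hp,T}$.

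\emph{Step 2 ($\sigma_i = \sigma_y$).} Since $\sigma_y$ is a positive constant, $(\sigma_y,\phi_i)_{0,\Omega} = \sigma_y D_i$, and therefore $\sigma_i = \sigma_y$ for all $i$; this step does not even need \eqref{eq:mappingAssumpt}.

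Combining the two steps with the Lagrange property $\phi_{\zeta(k,T)}(\mv{M}_T(\hat{\mv{x}}_{l,T})) = \delta_{kl}$ and the disjoint supports of basis functions on different elements proves both set inclusions simultaneously. The main obstacle is Step~1: one must check carefully that the Jacobian determinant has degree at most one in each variable and that the $p_T$-point Gauss rule is exact for degree $2p_T-1$ per direction. This is precisely where \eqref{eq:mappingAssumpt} is used, and for general trilinear hexahedra, whose Jacobian determinant has higher degree, the argument fails.
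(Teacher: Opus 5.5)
Your proof is correct and is essentially the argument behind the cited result. Under \eqref{eq:mappingAssumpt}, the $p_T$-point tensor Gauss rule integrates $\wh{\phi}_{k,T}\,\wh{\phi}_{l,T}\,\det\nabla\mv{M}_T$ exactly, so the Gauss--Lagrange mass matrix is diagonal and $\varphi_i=\phi_i$; Theorem~\ref{thm:decoupling_lambdahp} with $\sigma_i=\sigma_y$ then turns the coefficient bounds into the nodal bounds. The only step worth phrasing more carefully is dropping the absolute value for $d=3$: you need only that $\det\nabla\mv{M}_T$ has constant sign on $\wh{T}$, which follows most cleanly from the bijectivity of $\mv{M}_T$ assumed in the paper rather than from convexity.
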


Note that the above representation is a discretization of \eqref{eq:lambda_strong}, for $d=3$, leading to a discrete mixed variational formulation that, in general, differs from \eqref{eq:discrete_MVF} if \eqref{eq:mappingAssumpt} does not hold true.

\begin{theorem}[{\cite[Thm.6]{ref:Bammer2025apriori}}]\label{prop:apriori_Estimate}
	Let $(\mv{u},\bs{p},\bs{\lambda})\in\VV\times\Lambda$ and $(\mv{u}_{hp},\bs{p}_{hp},\bs{\lambda}_{hp})\in\VV_{hp}\times\Lambda_{hp}$ be the unique solution of \eqref{eq:mixed_varF} and \eqref{eq:discrete_MVF}, respectively. Under assumption \eqref{eq:mappingAssumpt} there are $c_1,c_2>0$ s.t.~for all $(\mv{v}_{hp},\bs{q}_{hp},\bs{\mu}_{hp})\in\VV_{hp}\times\Lambda_{hp}$ and arbitrary $\bs{\mu}\in\Lambda$ it holds that
	\begin{align*}
		\Vert (\mv{u} - \mv{u}_{hp}, \bs{p} - \bs{p}_{hp}) \Vert^2 + \Vert \bs{\lambda} - \bs{\lambda}_{hp} \Vert_{0,\Omega}^2
		&\leq c_1 \, \Big( \Vert (\mv{u} - \mv{v}_{hp}, \bs{p} - \bs{q}_{hp})\Vert^2 + \Vert \bs{\lambda} - \bs{\mu}_{hp}\Vert_{0,\Omega}^2 \Big) \\
		& \qquad\qquad + c_2 \, \big( \bs{p}, \bs{\lambda}_{hp} - \bs{\mu} + \bs{\lambda} - \bs{\mu}_{hp} \big)_{0,\Omega}.
	\end{align*}
\end{theorem}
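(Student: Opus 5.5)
We follow the standard Falk-type argument for mixed formulations of variational inequalities, combining $\VV$-ellipticity of $a(\cdot,\cdot)$, the discrete inf-sup condition of Theorem~\ref{thm:equivalence_discreteF}, and the two sign conditions. Write $e := (\mv{u}-\mv{u}_{hp},\bs{p}-\bs{p}_{hp})$ and let $\alpha>0$ be the ellipticity constant. For arbitrary $(\mv{v}_{hp},\bs{q}_{hp})\in\VV_{hp}$ we split
\begin{align*}
\alpha\norm{e}^2 \leq a\big(e,e\big) = a\big(e,(\mv{u}-\mv{v}_{hp},\bs{p}-\bs{q}_{hp})\big) + a\big(e,(\mv{v}_{hp}-\mv{u}_{hp},\bs{q}_{hp}-\bs{p}_{hp})\big).
\end{align*}
Galerkin orthogonality comes from subtracting \eqref{eq:discrete_MVF} from \eqref{eq:weak_vareq} tested with discrete functions:
\begin{align*}
a\big(e,(\mv{w}_{hp},\bs{r}_{hp})\big) = -(\bs{\lambda}-\bs{\lambda}_{hp},\bs{r}_{hp})_{0,\Omega}.
\end{align*}
Hence the second term equals $-(\bs{\lambda}-\bs{\lambda}_{hp},\bs{q}_{hp}-\bs{p})_{0,\Omega}-(\bs{\lambda}-\bs{\lambda}_{hp},\bs{p}-\bs{p}_{hp})_{0,\Omega}$. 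The first piece is bounded by Young's inequality. The second piece, $-(\bs{\lambda}-\bs{\lambda}_{hp},\bs{p}-\bs{p}_{hp})_{0,\Omega}$, must be handled by the variational inequalities.

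We expand this term as
\begin{align*}
(\bs{\lambda}_{hp}-\bs{\lambda},\bs{p})_{0,\Omega}+(\bs{\lambda}-\bs{\lambda}_{hp},\bs{p}_{hp})_{0,\Omega}.
\end{align*}
\begin{itemize}
\item For the first part, insert an arbitrary $\bs{\mu}\in\Lambda$. The continuous inequality gives $(\bs{\mu}-\bs{\lambda},\bs{p})_{0,\Omega}\leq 0$, so $(\bs{\lambda}_{hp}-\bs{\lambda},\bs{p})_{0,\Omega}\leq(\bs{\lambda}_{hp}-\bs{\mu},\bs{p})_{0,\Omega}$.
\item For the second part, insert $\bs{\mu}_{hp}\in\Lambda_{hp}$. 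By \eqref{eq:disc_MVF_inequalityConst}, $(\bs{\lambda}-\bs{\lambda}_{hp},\bs{p}_{hp})_{0,\Omega}\leq(\bs{\lambda}-\bs{\mu}_{hp},\bs{p}_{hp})_{0,\Omega}$. We then write $\bs{p}_{hp}=\bs{p}-(\bs{p}-\bs{p}_{hp})$, which yields $(\bs{\lambda}-\bs{\mu}_{hp},\bs{p})_{0,\Omega}$ plus a term bounded by $\norm{\bs{\lambda}-\bs{\mu}_{hp}}_{0,\Omega}\norm{e}$.
\end{itemize}
Collecting the $\bs{p}$-pairings produces exactly $(\bs{p},\bs{\lambda}_{hp}-\bs{\mu}+\bs{\lambda}-\bs{\mu}_{hp})_{0,\Omega}$. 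Young's inequality then absorbs the $\norm{e}$ factors into the left-hand side. The result is
\begin{align*}
\norm{e}^2\lesssim \norm{(\mv{u}-\mv{v}_{hp},\bs{p}-\bs{q}_{hp})}^2+\epsilon\norm{\bs{\lambda}-\bs{\lambda}_{hp}}_{0,\Omega}^2+\norm{\bs{\lambda}-\bs{\mu}_{hp}}_{0,\Omega}^2+(\bs{p},\bs{\lambda}_{hp}-\bs{\mu}+\bs{\lambda}-\bs{\mu}_{hp})_{0,\Omega}.
\end{align*}
Here $\epsilon>0$ is small and still to be chosen.

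For the multiplier we use the triangle inequality $\norm{\bs{\lambda}-\bs{\lambda}_{hp}}_{0,\Omega}\leq\norm{\bs{\lambda}-\bs{\mu}_{hp}}_{0,\Omega}+\norm{\bs{\mu}_{hp}-\bs{\lambda}_{hp}}_{0,\Omega}$. The discrete part is bounded via part~1 of Theorem~\ref{thm:equivalence_discreteF}: for $(\mv{v}_{hp},\bs{q}_{hp})\in\VV_{hp}$, Galerkin orthogonality gives
\begin{align*}
(\bs{\mu}_{hp}-\bs{\lambda}_{hp},\bs{q}_{hp})_{0,\Omega}=(\bs{\mu}_{hp}-\bs{\lambda},\bs{q}_{hp})_{0,\Omega}+a\big(e,(\mv{v}_{hp},\bs{q}_{hp})\big).
\end{align*}
Continuity of $a$ then gives $\norm{\bs{\mu}_{hp}-\bs{\lambda}_{hp}}_{0,\Omega}\lesssim\norm{\bs{\lambda}-\bs{\mu}_{hp}}_{0,\Omega}+\norm{e}$. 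Squaring this and adding a small multiple of it to the $\norm{e}^2$ estimate closes the argument. We choose the weight, and $\epsilon$, so that the $\norm{\bs{\lambda}-\bs{\lambda}_{hp}}$ contribution on the right is absorbed.

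The main obstacle is this balancing of constants: the inf-sup bound feeds $\norm{e}$ back in, while the energy estimate feeds $\norm{\bs{\lambda}-\bs{\lambda}_{hp}}$ back in. The fix is to multiply the multiplier estimate by a sufficiently small factor before adding. A second point is the sign bookkeeping that leaves the nonconformity term with a positive coefficient $c_2$. Assumption \eqref{eq:mappingAssumpt} enters only through Theorem~\ref{prop:rep_lambdahp}, which identifies $\Lambda_{hp}$ consistently with the nodal constraint. The estimate itself uses only the abstract inequality \eqref{eq:disc_MVF_inequalityConst}, so I would check whether it is needed at all beyond fixing the setting.
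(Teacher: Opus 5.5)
Your proposal is correct. The paper gives no proof of its own here and only cites Thm.~6 of the a priori paper; your argument is the standard Falk-type route that produces exactly the stated bound: ellipticity plus Galerkin orthogonality, the two sign conditions yielding precisely $(\bs{p},\bs{\lambda}_{hp}-\bs{\mu}+\bs{\lambda}-\bs{\mu}_{hp})_{0,\Omega}$ (its sign is irrelevant, since it is only ever multiplied by positive constants), and the discrete inf-sup condition of Theorem~\ref{thm:equivalence_discreteF} added with a small weight. Two small remarks: the identity should read $(\bs{\mu}_{hp}-\bs{\lambda}_{hp},\bs{q}_{hp})_{0,\Omega}=(\bs{\mu}_{hp}-\bs{\lambda},\bs{q}_{hp})_{0,\Omega}-a\big(e,(\mv{v}_{hp},\bs{q}_{hp})\big)$, a harmless sign slip since only absolute values are used afterwards; and you are right that \eqref{eq:mappingAssumpt} plays no role in this abstract estimate, being needed only for Theorem~\ref{prop:rep_lambdahp} and the subsequent rate results.
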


In the rest of the article, we let $h := \max_{T\in\Th} h_T$,  $p := \min_{T\in\Th} p_T$
and write $A\lesssim B$ if there is a constant $c>0$ \emph{independent of $h$ and $p$} such that $A \leq c \, B$. Moreover, let $\abs{\, \cdot \,}_{k,\Omega}$ denote the Sobolev seminorm on $H^k(\Omega,X)$ for $X\in\lbrace\RR^d, \RR^{d\times d}\rbrace$ and $\abs{\, \cdot \,}_{k,T}$ its local version on $T\in\Th$. The next results heavily rely on density arguments and approximation properties of the following projection/interpolation operators, see \cite{ref:Ainsworth_2000, ref:melenk_2005, ref:Sanchez_1984}.
\begin{enumerate}
	\item[$\bullet$] Let $\II_{hp} : H^1(\Omega,\RR^d) \longrightarrow V_{hp}$ be an $H^1$-projection operator which, for $s\geq 1$, satisfies
	\begin{align*}
		\Vert \mv{v} - \II_{hp}(\mv{v}) \Vert_{1,\Omega} 
		 \lesssim \frac{h^{\min\lbrace p, s-1\rbrace}}{p^{s-1}} \, |\mv{v}|_{s,\Omega}
		\qquad\forall \, \mv{v}\in H^s(\Omega,\RR^d).
	\end{align*}
	\item[$\bullet$] For $T\in\Th$ and $t\geq 0$ the standard $L^2$-projection operator $\P_{hp} : Q\longrightarrow Q_{hp}$ satisfies
	\begin{align*}
		\Vert \bs{q} - \P_{hp}(\bs{q}) \Vert_{0,T}
		 \lesssim \frac{h_T^{\min\lbrace p_T, t\rbrace}}{p_T^t} \, \abs{\bs{q}}_{t,T}
		\qquad\forall \, \bs{q}\in H^t(T,\RR^{d\times d}).
	\end{align*}
	\item[$\bullet$] Let $\Jhp : \prod_{T\in\Th} C^0(T,\RR^{d\times d})\cap Q \longrightarrow Q_{hp}$ be the nodal interpolation operator defined in \cite{ref:Bammer2025apriori}. Then, for $T\in\Th$ and $t>d/2$ it holds that
	\begin{align*}
		\Vert \bs{q} - \Jhp(\bs{q}) \Vert_{0,T}
		 \lesssim \frac{h_T^{\min\lbrace p_T, t\rbrace}}{p_T^t} \, \abs{\bs{q}}_{t,T}
		\qquad\forall \, \bs{q}\in H^t(T,\RR^{d\times d}).
	\end{align*}
	%
\end{enumerate}

\begin{theorem}[Norm Convergence -- {\cite[Thm.8]{ref:Bammer2025apriori}}]
	Let $(\mv{u},\bs{p},\bs{\lambda})\in\VV\times\Lambda$ and $(\mv{u}_{hp},\bs{p}_{hp},\bs{\lambda}_{hp})\in\VV_{hp}\times\Lambda_{hp}$ be the solution of \eqref{eq:mixed_varF} and \eqref{eq:discrete_MVF}, respectively. Then,
	\begin{align*}
		\lim_{h/p\to 0} \Big( \Vert \mv{u} - \mv{u}_{hp} \Vert_{1,\Omega}^2 + \Vert \bs{p} - \bs{p}_{hp} \Vert_{0,\Omega}^2 + \Vert \bs{\lambda} - \bs{\lambda}_{hp} \Vert_{0,\Omega}^2 \Big) = 0.
	\end{align*}
\end{theorem}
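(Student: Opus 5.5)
We would start from the a priori estimate of Theorem~\ref{prop:apriori_Estimate}. The smooth case comes first; the general case then follows by a density argument combined with the Lipschitz stability of Theorem~1, Part~2.

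\textbf{Step 1: smooth data.} Assume first that the solution $(\mv{u},\bs{p},\bs{\lambda})$ is regular, say $\mv{u}\in H^s(\Omega,\RR^d)$ with $s>1$ and $\bs{p},\bs{\lambda}\in H^t$ elementwise with $t>d/2$. In Theorem~\ref{prop:apriori_Estimate} we choose the following comparison functions:
\begin{itemize}
\item $\mv{v}_{hp}:=\II_{hp}(\mv{u})$ and $\bs{q}_{hp}:=\P_{hp}(\bs{p})$;
\item $\bs{\mu}_{hp}:=\Jhp(\bs{\lambda})$, which lies in $\Lambda_{hp}$ by Theorem~\ref{prop:rep_lambdahp}, because $\abs{\bs{\lambda}}_F\le\sigma_y$ pointwise and $\Jhp$ only uses nodal values at the Gauss points;
\item $\bs{\mu}:=\bs{\lambda}$.
\end{itemize}
The first bracket of the estimate then tends to $0$ as $h/p\to0$, by the stated approximation properties of $\II_{hp}$, $\P_{hp}$ and $\Jhp$.

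The remaining consistency term is $(\bs{p},\bs{\lambda}_{hp}-\bs{\lambda})_{0,\Omega}+(\bs{p},\bs{\lambda}-\bs{\mu}_{hp})_{0,\Omega}$. The second summand is bounded by $\norm{\bs{p}}_{0,\Omega}\norm{\bs{\lambda}-\Jhp\bs{\lambda}}_{0,\Omega}\to0$. For the first summand we split
\[
(\bs{p},\bs{\lambda}_{hp}-\bs{\lambda})_{0,\Omega}=(\bs{p}-\P_{hp}\bs{p},\bs{\lambda}_{hp})_{0,\Omega}+(\P_{hp}\bs{p},\bs{\lambda}_{hp})_{0,\Omega}-(\bs{p},\bs{\lambda})_{0,\Omega}.
\]
\begin{itemize}
\item The first piece tends to $0$, since $\norm{\bs{\lambda}_{hp}}_{0,\Omega}$ is uniformly bounded: the coefficients satisfy $\abs{\bs{\lambda}_i}_F\le\sigma_i$ by Theorem~\ref{thm:decoupling_lambdahp}, and the $\varphi_i$ are $L^2$-stable under local quasi-uniformity.
\item For the second piece, $\P_{hp}\bs{p}\in Q_{hp}$ and $\bs{\lambda}_{hp}\in\Lambda_{hp}$ give $(\P_{hp}\bs{p},\bs{\lambda}_{hp})_{0,\Omega}\le\psi_{hp}(\P_{hp}\bs{p})$.
\item For the third piece, $(\bs{p},\bs{\lambda})_{0,\Omega}=\psi(\bs{p})$ by the complementarity in \eqref{eq:mixed_varF}, since $\psi(\bs{p})=\sup_{\bs{\mu}\in\Lambda}(\bs{\mu},\bs{p})_{0,\Omega}$.
\end{itemize}
Hence the first summand is at most $o(1)+\psi_{hp}(\P_{hp}\bs{p})-\psi(\bs{p})$.

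\textbf{Step 2: the quadrature error $\psi_{hp}(\P_{hp}\bs{p})-\psi(\bs{p})$.} This is the main obstacle. Split it as $[\psi_{hp}(\P_{hp}\bs{p})-\psi_{hp}(\Jhp\bs{p})]+[\psi_{hp}(\Jhp\bs{p})-\psi(\bs{p})]$.
\begin{itemize}
\item For the first bracket, Lemma~2 gives $\psi_{hp}(\bs{q}_{hp})=\sum_i\abs{\bs{q}_i}_F(\sigma_y,\phi_i)_{0,\Omega}$. Positivity of the Gauss weights together with a discrete Cauchy--Schwarz inequality makes $\psi_{hp}$ Lipschitz on $Q_{hp}$ with respect to $\norm{\cdot}_{0,\Omega}$, uniformly in $h,p$; this holds because Gauss quadrature with $p_T^d$ points is exact on $\PP_{p_T-1}^2$ times the $\PP_1$ Jacobian under \eqref{eq:mappingAssumpt}. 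So the first bracket is bounded by $\norm{\P_{hp}\bs{p}-\Jhp\bs{p}}_{0,\Omega}\to0$.
\item For the second bracket, $\psi_{hp}(\Jhp\bs{p})=\Q_{hp}(\sigma_y\abs{\bs{p}}_F)$, because the nodal values coincide. This is a Gauss quadrature of the continuous function $\sigma_y\abs{\bs{p}}_F$, and it converges to $\psi(\bs{p})$ as $h/p\to0$ by the standard convergence of Gauss rules for continuous integrands on shape-regular meshes.
\end{itemize}

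\textbf{Step 3: density.} For general data, approximate $\mv{f},\mv{g},\sigma_y$ by smooth data $\mv{f}_\varepsilon,\mv{g}_\varepsilon,\sigma_{y,\varepsilon}$ such that the associated solution is regular enough; this relies on a density/regularity argument as in \cite{ref:Bammer2025apriori}. Theorem~1, Part~2 controls the continuous perturbation. The analogous discrete Lipschitz stability, which follows from the same proof using the discrete inf-sup condition of Theorem~\ref{thm:equivalence_discreteF}, Part~1, with constants independent of $h,p$, controls the discrete perturbation. A triangle inequality then gives $\limsup_{h/p\to0}(\ldots)\lesssim\varepsilon$ for every $\varepsilon>0$, which proves the claim.
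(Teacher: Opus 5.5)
\textbf{There is a genuine gap in Step~3.} Your Steps~1 and~2 are essentially correct for a solution with continuous $\bs p,\bs\lambda$. (In fact $(\bs p-\P_{hp}\bs p,\bs\lambda_{hp})_{0,\Omega}=0$ exactly, since $\bs\lambda_{hp}\in Q_{hp}$.) Step~3, however, needs data $(\mv f_\varepsilon,\mv g_\varepsilon,\sigma_{y,\varepsilon})$ arbitrarily close to the given data whose solution has $\bs p,\bs\lambda\in H^t$ with $t>d/2$. Without continuity, $\Jhp\bs\lambda$ is not even defined, and Theorem~\ref{prop:rep_lambdahp} cannot be used.

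Smoothing the data does not provide this, and no regularity result of that type is available. As the paper itself points out, elastoplastic solutions are in general not regular even for smooth data and domain: there are elastic--plastic free boundaries and singularities at corners and at Dirichlet--Neumann transitions of the polygon. Moreover, with merely $L^\infty$ coefficients $\CC,\HH$, the multiplier $\bs\lambda=\dev(\bs\sigma-\HH\bs p)$ is in general discontinuous however smooth $\mv f,\mv g,\sigma_y$ are. Deferring this to ``a density/regularity argument'' therefore presupposes exactly what the density argument has to avoid. The continuous and discrete Lipschitz stability you invoke are fine, but they do not help without such regular approximate problems.

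\textbf{How to repair it.} Apply density to the comparison functions in Theorem~\ref{prop:apriori_Estimate} rather than to the data. That estimate holds for arbitrary $(\mv v_{hp},\bs q_{hp},\bs\mu_{hp})\in\VV_{hp}\times\Lambda_{hp}$ and $\bs\mu\in\Lambda$, and the exact solution enters only through $(\bs p,\cdot)_{0,\Omega}$.

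Given $\varepsilon>0$, pick smooth $\mv u_\varepsilon\in V$ and smooth $\SS_{d,0}$-valued $\bs p_\varepsilon$ with $\norm{\mv u-\mv u_\varepsilon}_{1,\Omega}+\norm{\bs p-\bs p_\varepsilon}_{0,\Omega}\le\varepsilon$. Pick also a smooth $\bs\lambda_\varepsilon\in\Lambda$ with $\norm{\bs\lambda-\bs\lambda_\varepsilon}_{0,\Omega}\le\varepsilon$; mollifying the zero extension of $\bs\lambda$ works, since the convex pointwise constraint $\abs{\cdot}_F\le\sigma_y$ survives. Take $\mv v_{hp}=\II_{hp}\mv u_\varepsilon$, $\bs q_{hp}=\P_{hp}\bs p$, $\bs\mu_{hp}=\Jhp\bs\lambda_\varepsilon\in\Lambda_{hp}$ and $\bs\mu=\bs\lambda$, and split
\[
(\bs p,\bs\lambda_{hp})_{0,\Omega}=(\bs p-\bs p_\varepsilon,\bs\lambda_{hp})_{0,\Omega}+(\bs p_\varepsilon-\Jhp\bs p_\varepsilon,\bs\lambda_{hp})_{0,\Omega}+(\Jhp\bs p_\varepsilon,\bs\lambda_{hp})_{0,\Omega}.
\]
The first two terms are controlled by the uniform bound on $\norm{\bs\lambda_{hp}}_{0,\Omega}$ you already noted. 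The last term is at most $\psi_{hp}(\Jhp\bs p_\varepsilon)=\Q_{hp}(\sigma_y\abs{\bs p_\varepsilon}_F)$, which tends to $\psi(\bs p_\varepsilon)$ by your quadrature argument. Finally, $\abs{\psi(\bs p_\varepsilon)-\psi(\bs p)}\le\sigma_y\abs{\Omega}^{1/2}\varepsilon$.

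Together with $(\bs p,\bs\lambda)_{0,\Omega}=\psi(\bs p)$ and the approximation properties of $\II_{hp},\P_{hp},\Jhp$, this gives $\limsup_{h/p\to0}(\dots)\lesssim\varepsilon+\varepsilon^2$ for every $\varepsilon>0$. No regularity of the solution is needed and Step~3 disappears. This is the kind of density argument the paper refers to.
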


For the derivation of convergence rates let the solution of \eqref{eq:mixed_varF} satisfy the regularity assumption
\begin{align*}
	(\mv{u},\bs{p},\bs{\lambda}) \in H^s(\Omega,\RR^d) \times H^t(\Omega,\RR^{d\times d}) \times  H^s(\Omega,\RR^{d\times d})
\end{align*}
for $s\geq 1$ and $t,l\geq 0$. Note that \eqref{eq:discrete_MVF} is conforming in the displacement and plastic strain variable but, in general, \emph{not} in the discrete Lagrange multiplier. However, if $p_T=1$ for all $T\in\Th$ the discretization is conforming in all variables so that $\Lambda_{hp}\subseteq\Lambda$ and $\P_{hp}(\Lambda)\subseteq\Lambda_{hp}$.

\begin{theorem}[Optimal Convergence -- {\cite[Thm.9]{ref:Bammer2025apriori}}]
	Let $p_T=1$ for all $T\in\Th$ and the weak solution $(\mv{u},\bs{p},\bs{\lambda})$ of \eqref{eq:mixed_varF} satisfy the above regularity assumption. Then, 
	\begin{align*}
		\Vert (\mv{u} - \mv{u}_{hp}, \bs{p} - \bs{p}_{hp}) \Vert^2 + \Vert \bs{\lambda} - \bs{\lambda}_{hp} \Vert_{0,\Omega}^2
		 \lesssim h^{2 \, \min\lbrace 1, s-1, t, l \rbrace} \, \Big( |\mv{u}|_{s,\Omega}^2 + |\bs{p}|_{t,\Omega}^2 + |\bs{\lambda}|_{l,\Omega}^2 \Big).
	\end{align*}
\end{theorem}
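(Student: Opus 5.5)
We start from Theorem~\ref{prop:apriori_Estimate} and choose the free arguments so that the consistency term $c_2\,(\bs{p},\bs{\lambda}_{hp}-\bs{\mu}+\bs{\lambda}-\bs{\mu}_{hp})_{0,\Omega}$ becomes non-positive, or at least of optimal order. With $p_T=1$ for all $T$, the spaces $W_{hp}$ and $Q_{hp}$ consist of elementwise constants. We have $n_T=1$, $\phi_i=\varphi_i=\chi_T$ and $\sigma_i=|T|^{-1}(\sigma_y,1)_{0,T}$; since $\sigma_y$ is constant, $\sigma_i=\sigma_y$. Theorem~\ref{thm:decoupling_lambdahp} therefore gives $\Lambda_{hp}=\{\bs{\mu}_{hp}\in Q_{hp};\,|\bs{\mu}_{hp}|_F\le\sigma_y\}\subseteq\Lambda$. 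Moreover, $\P_{hp}$ is elementwise averaging, and by Jensen's inequality (convexity of $|\cdot|_F$) we get $\P_{hp}(\Lambda)\subseteq\Lambda_{hp}$, as noted before the statement.

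\textbf{Choice of the free arguments.} Since $\bs{\lambda}_{hp}\in\Lambda_{hp}\subseteq\Lambda$, we may take $\bs{\mu}:=\bs{\lambda}_{hp}$, so the first part of the consistency term vanishes. For the second part we take $\bs{\mu}_{hp}:=\P_{hp}\bs{\lambda}\in\Lambda_{hp}$. The remaining term is
\begin{align*}
(\bs{p},\bs{\lambda}-\P_{hp}\bs{\lambda})_{0,\Omega}=(\bs{p}-\P_{hp}\bs{p},\bs{\lambda}-\P_{hp}\bs{\lambda})_{0,\Omega},
\end{align*}
using the $L^2$-orthogonality of $\P_{hp}$. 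By Cauchy--Schwarz and the approximation property of $\P_{hp}$ with $p_T=1$, this is bounded by $h^{\min\{1,t\}}|\bs{p}|_{t,\Omega}\,h^{\min\{1,l\}}|\bs{\lambda}|_{l,\Omega}$. Young's inequality then bounds it by $h^{2\min\{1,t\}}|\bs{p}|_{t,\Omega}^2+h^{2\min\{1,l\}}|\bs{\lambda}|_{l,\Omega}^2$. Both exponents are at least $2\min\{1,s-1,t,l\}$ (for $h\le 1$, or after absorbing $\operatorname{diam}\Omega$ into the constant).

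\textbf{Best-approximation terms.} We take $\mv{v}_{hp}:=\II_{hp}\mv{u}$ and $\bs{q}_{hp}:=\P_{hp}\bs{p}$. The listed estimates with $p=1$ give
\begin{align*}
\Vert\mv{u}-\II_{hp}\mv{u}\Vert_{1,\Omega}^2 &\lesssim h^{2\min\{1,s-1\}}|\mv{u}|_{s,\Omega}^2,\\
\Vert\bs{p}-\P_{hp}\bs{p}\Vert_{0,\Omega}^2 &\lesssim h^{2\min\{1,t\}}|\bs{p}|_{t,\Omega}^2,\\
\Vert\bs{\lambda}-\P_{hp}\bs{\lambda}\Vert_{0,\Omega}^2 &\lesssim h^{2\min\{1,l\}}|\bs{\lambda}|_{l,\Omega}^2.
\end{align*}
The local estimates for $\P_{hp}$ are summed over $T\in\Th$ to give the global ones. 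Collecting all terms and bounding each rate by the minimum yields the claim.

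\textbf{Main point of care.} The key step is justifying $\Lambda_{hp}\subseteq\Lambda$ and $\P_{hp}(\Lambda)\subseteq\Lambda_{hp}$ in the lowest-order case. For the first inclusion we use that the constant Gauss point and the constant basis coincide, so the discrete bound $|\bs{\mu}_{hp}|_F\le\sigma_y$ holds pointwise. For the second we use Jensen's inequality, $|\frac{1}{|T|}\int_T\bs{\mu}|_F\le\frac{1}{|T|}\int_T|\bs{\mu}|_F\le\sigma_y$. Note also that the regularity assumption is read with $\bs{\lambda}\in H^l$, reflecting the evident typo $H^s$ for the third component.
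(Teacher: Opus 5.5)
Your proposal is correct and follows the argument the paper indicates (and defers to the cited reference for). In the lowest-order case, $\Lambda_{hp}\subseteq\Lambda$ and $\P_{hp}(\Lambda)\subseteq\Lambda_{hp}$ permit the choices $\bs{\mu}=\bs{\lambda}_{hp}$ and $\bs{\mu}_{hp}=\P_{hp}\bs{\lambda}$ in Theorem~\ref{prop:apriori_Estimate}, and the orthogonality identity $(\bs{p},\bs{\lambda}-\P_{hp}\bs{\lambda})_{0,\Omega}=(\bs{p}-\P_{hp}\bs{p},\bs{\lambda}-\P_{hp}\bs{\lambda})_{0,\Omega}$ is what produces the squared, optimal rate. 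You should also note that assumption \eqref{eq:mappingAssumpt}, which Theorem~\ref{prop:apriori_Estimate} requires, holds vacuously here because it only concerns elements with $p_T\geq 2$.
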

Since for higher-order methods the \emph{non-conformity error} $(\bs{p}, \bs{\lambda}_{hp} - \bs{\mu})_{0,\Omega}$ has to be estimated for $\bs{\mu}\in\Lambda$, cf.~Theorem~\ref{prop:apriori_Estimate}, and as $\P_{hp}(\Lambda)\nsubseteq\Lambda_{hp}$ the operator $\Jhp$ may has to be used on $T\in\Th$ with $p_T\geq 2$ one usually obtains reduced guaranteed convergence rates (which are common for higher-order mixed methods for variational inequalities).

\begin{theorem}[Convergence Rates -- {\cite[Thm.11]{ref:Bammer2025apriori}}]
	If the weak solution $(\mv{u},\bs{p},\bs{\lambda})$ of \eqref{eq:mixed_varF} satisfies the above regularity assumption for $s\geq 1$, $t,l\geq d/2$ and \eqref{eq:mappingAssumpt} holds true, then
	\begin{align*}
		\Vert (\mv{u} - \mv{u}_{hp}, \bs{p} - \bs{p}_{hp}) \Vert^2 + \Vert \bs{\lambda} - \bs{\lambda}_{hp} \Vert_{0,\Omega}^2
		 \lesssim \frac{h^{\min\lbrace p, 2s-2, t, l \rbrace}}{p^{\min\lbrace 2s-2, t, l \rbrace}}.
	\end{align*}
\end{theorem}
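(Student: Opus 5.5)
The plan is to start from the abstract estimate of Theorem~\ref{prop:apriori_Estimate} and bound each of its terms separately, choosing the free functions as
\begin{align*}
	\mv{v}_{hp} := \II_{hp}(\mv{u}), \qquad
	\bs{q}_{hp} := \P_{hp}(\bs{p}), \qquad
	\bs{\mu}_{hp} := \Jhp(\bs{\lambda}).
\end{align*}
Since $l>d/2$, the function $\bs{\lambda}$ is continuous on each element, so $\Jhp$ is well defined. Because $\Jhp$ interpolates at the Gauss points $\mv{M}_T(\hat{\mv{x}}_{k,T})$, where $\abs{\bs{\lambda}}_F\le\sigma_y$, Theorem~\ref{prop:rep_lambdahp} (which uses \eqref{eq:mappingAssumpt}) gives $\Jhp(\bs{\lambda})\in\Lambda_{hp}$. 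For $p_T=1$ one may use $\P_{hp}$ instead. The best-approximation part $c_1(\dots)$ is then bounded by the three listed approximation results. This gives the contribution
\begin{align*}
	h^{2\min\{p,s-1\}}p^{-2(s-1)} + h^{2\min\{p,t\}}p^{-2t} + h^{2\min\{p,l\}}p^{-2l},
\end{align*}
which is of higher order than the claimed bound. Here I use that $h/p\lesssim 1$, so the exponents $2s-2$, $2t$, $2l$ may be replaced by the smaller $\min\{2s-2,t,l\}$.

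Next I treat the consistency term $c_2(\bs{p},\bs{\lambda}_{hp}-\bs{\mu}+\bs{\lambda}-\bs{\mu}_{hp})_{0,\Omega}$. I split it into $(\bs{p},\bs{\lambda}-\bs{\mu}_{hp})_{0,\Omega}$ and the non-conformity part $(\bs{p},\bs{\lambda}_{hp}-\bs{\mu})_{0,\Omega}$. For the first piece, Cauchy--Schwarz only yields $\norm{\bs{p}}_{0,\Omega}\,\norm{\bs{\lambda}-\Jhp\bs{\lambda}}_{0,\Omega}\lesssim h^{\min\{p,l\}}/p^{l}$. This is exactly the halved exponent that appears in the claim, which explains the loss of rate. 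One can also improve it by inserting $\bs{p}-\P_{hp}\bs{p}$ and using the complementarity $(\bs{p},\bs{\lambda})_{0,\Omega}=\psi(\bs{p})$ together with $(\bs{p},\bs{\mu}_{hp})_{0,\Omega}\le\psi(\bs{p})$, but a cruder bound already suffices.

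For the non-conformity term I choose $\bs{\mu}\in\Lambda$ as a pointwise radial projection of $\bs{\lambda}_{hp}$ onto the ball of radius $\sigma_y$, i.e.\ $\bs{\mu}:=\min\{1,\sigma_y/\abs{\bs{\lambda}_{hp}}_F\}\bs{\lambda}_{hp}$. Then
\begin{align*}
	(\bs{p},\bs{\lambda}_{hp}-\bs{\mu})_{0,\Omega}
	\le \norm{\bs{p}}_{0,\Omega}\,\norm{(\abs{\bs{\lambda}_{hp}}_F-\sigma_y)^+}_{0,\Omega}.
\end{align*}
Since $\abs{\bs{\lambda}_{hp}}_F\le\sigma_y$ at the Gauss points, the positive part is bounded by the interpolation error of the piecewise polynomial $\abs{\bs{\lambda}_{hp}}_F^2$ between those nodes. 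To control it, I write $\bs{\lambda}_{hp}=(\bs{\lambda}_{hp}-\bs{\lambda})+\bs{\lambda}$. The function $\bs{\lambda}$ itself satisfies the constraint everywhere, so the violation is at most
\begin{align*}
	\norm{\bs{\lambda}_{hp}-\Jhp\bs{\lambda}_{hp}}
	\quad\text{plus}\quad
	\norm{\bs{\lambda}-\Jhp\bs{\lambda}}\lesssim h^{\min\{p,l\}}/p^l.
\end{align*}
Here $\Jhp\bs{\lambda}_{hp}=\bs{\lambda}_{hp}$. The remaining error term $\norm{\bs{\lambda}-\bs{\lambda}_{hp}}$ enters only linearly, multiplied by a small factor, so it can be absorbed into the left-hand side via Young's inequality.

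Collecting all contributions gives
\begin{align*}
	\text{LHS}\lesssim \frac{h^{2\min\{p,s-1\}}}{p^{2s-2}}+\frac{h^{2\min\{p,t\}}}{p^{2t}}+\frac{h^{\min\{p,l\}}}{p^{l}}+\frac{h^{\min\{p,t\}}}{p^t},
\end{align*}
which is bounded by $h^{\min\{p,2s-2,t,l\}}/p^{\min\{2s-2,t,l\}}$. I expect the non-conformity estimate to be the main obstacle. The difficulty is bounding the constraint violation of $\bs{\lambda}_{hp}$ between the Gauss points by $\Jhp$-type errors, uniformly in $p$, without creating a circular dependence on the unknown error. Should absorption fail, I would fall back on $\norm{\bs{p}}_{0,\Omega}$ times a direct interpolation estimate, using the piecewise $H^t$ regularity of $\bs{p}$ on the contact set.
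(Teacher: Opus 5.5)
You set $\mv{v}_{hp}=\II_{hp}(\mv{u})$, $\bs{q}_{hp}=\P_{hp}(\bs{p})$ and $\bs{\mu}_{hp}=\Jhp(\bs{\lambda})$, which is admissible by Theorem~\ref{prop:rep_lambdahp}. These choices, your bound for the $c_1$-terms and the linear bound for $(\bs{p},\bs{\lambda}-\Jhp\bs{\lambda})_{0,\Omega}$ are all fine. The gap is the non-conformity term $(\bs{p},\bs{\lambda}_{hp}-\bs{\mu})_{0,\Omega}$, which is the core of the proof.

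With $\bs{\mu}$ the radial projection of $\bs{\lambda}_{hp}$, you need $\norm{(\abs{\bs{\lambda}_{hp}}_F-\sigma_y)^+}_{0,\Omega}$ to be small. Feasibility at the Gauss points gives no such information by itself. Already for $p_T=2$ the Gauss points lie strictly inside $T$, so a polynomial with $\abs{\cdot}_F\le\sigma_y$ there can exceed $\sigma_y$ near $\partial T$. For the same reason Gauss-point interpolation is not monotone, so ``the interpolation error of $\abs{\bs{\lambda}_{hp}}_F^2$'' does not dominate the positive part. Estimating that error for a discrete function would also need inverse estimates that cost powers of $p/h$.

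The only usable control is closeness to a feasible function, $(\abs{\bs{\lambda}_{hp}}_F-\sigma_y)^+\le\abs{\bs{\lambda}_{hp}-\bs{\lambda}}_F$. Your bound of the violation by $\norm{\bs{\lambda}-\Jhp\bs{\lambda}}_{0,\Omega}$ alone is unfounded: if $\bs{\lambda}\in Q_{hp}$, it would force $\bs{\lambda}_{hp}$ to be feasible everywhere, and there is no reason for that. What you actually obtain is $c_2\norm{\bs{p}}_{0,\Omega}\norm{\bs{\lambda}-\bs{\lambda}_{hp}}_{0,\Omega}$. The prefactor $c_2\norm{\bs{p}}_{0,\Omega}$ is fixed, not small. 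Young's inequality therefore leaves $\frac{1}{2}c_2^2\norm{\bs{p}}_{0,\Omega}^2$ on the right-hand side, which gives only boundedness of the error, not convergence.

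The fallback you sketch is not an argument. The linear $t$-term in your final collection comes from no step you carried out. Your proof also never uses $t>d/2$, which hints at what is missing.

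The missing idea is that the discrete constraint $\bs{\lambda}_{hp}\in\Lambda_{hp}$ can only be exploited against discrete test functions. So $\bs{p}$ must be moved into $Q_{hp}$ by $\Jhp$, and this is what $t>d/2$ is for. Take $\bs{\mu}:=\bs{\lambda}$ and split
\begin{align*}
	(\bs{p},\bs{\lambda}_{hp}-\bs{\lambda})_{0,\Omega}
	= (\bs{p}-\Jhp\bs{p},\bs{\lambda}_{hp}-\bs{\lambda})_{0,\Omega} + (\Jhp\bs{p},\bs{\lambda}_{hp})_{0,\Omega} - (\Jhp\bs{p},\bs{\lambda})_{0,\Omega}.
\end{align*}
By the definition of $\Lambda_{hp}$, and since $\Q_{hp}$ only sees nodal values, $(\Jhp\bs{p},\bs{\lambda}_{hp})_{0,\Omega}\le\psi_{hp}(\Jhp\bs{p})=\Q_{hp}(\sigma_y\abs{\bs{p}}_F)$.

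The complementarity $(\bs{\lambda},\bs{p})_{0,\Omega}=\psi(\bs{p})$ together with $\abs{\bs{\lambda}}_F\le\sigma_y$ gives $\bs{\lambda}:\bs{p}=\sigma_y\abs{\bs{p}}_F$ a.e. By elementwise continuity this holds at the Gauss points. Hence $\Q_{hp}(\sigma_y\abs{\bs{p}}_F)=\Q_{hp}(\Jhp\bs{\lambda}:\Jhp\bs{p})=(\Jhp\bs{\lambda},\Jhp\bs{p})_{0,\Omega}$. The last equality holds because, under \eqref{eq:mappingAssumpt}, the pulled-back integrand has degree at most $2p_T-1$ in each variable, so the quadrature is exact. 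Consequently
\begin{align*}
	(\bs{p},\bs{\lambda}_{hp}-\bs{\lambda})_{0,\Omega}
	\le \norm{\bs{p}-\Jhp\bs{p}}_{0,\Omega}\,\norm{\bs{\lambda}-\bs{\lambda}_{hp}}_{0,\Omega} + \norm{\bs{\lambda}-\Jhp\bs{\lambda}}_{0,\Omega}\,\norm{\Jhp\bs{p}}_{0,\Omega}.
\end{align*}
Now the first product carries the genuinely small factor $\norm{\bs{p}-\Jhp\bs{p}}_{0,\Omega}$ and can be absorbed by Young's inequality. The second is $\lesssim h^{\min\{p,l\}}/p^{l}$. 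Together with your remaining bounds this yields the stated rate.
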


\medskip


\textbf{A Posteriori Error Analysis} -- Let $(\mv{u}_N,\bs{p}_N,\bs{\lambda}_N)\in\VV\times Q$ be an arbitrary triple for which we aim to derive upper and lower error bounds based on an auxiliary problem. The considerations therefore include the cases where $(\mv{u}_N,\bs{p}_N,\bs{\lambda}_N)$ represents the discrete solution of \eqref{eq:discrete_MVF} or an approximation of it (for instance, when applying an iterative solution scheme). These error bounds then allow to construct a residual-based a posteriori error estimator $\eta^2$, that is reliable and efficient.

To separate the error corresponding to the finite element approximation from the one that can be associated with the discretization of the plasticity constraints we consider the auxiliary problem: \emph{For given $\bs{\lambda}_N\in Q$, find a pair $(\mv{u}^{\ast},\bs{p}^{\ast})\in\VV$ such that}
\begin{align}\label{eq:auxRel}
	a\big( (\mv{u}^{\ast},\bs{p}^{\ast}), (\mv{v},\bs{q}) \big) = \ell(\mv{v}) - (\bs{\lambda}_N,\bs{q})_{0,\Omega} \qquad \forall \, (\mv{v},\bs{q})\in\VV.
\end{align}

Note that the unique existence of a solution $(\mv{u}^{\ast},\bs{p}^{\ast})\in\VV$ of \eqref{eq:auxRel} is guaranteed by the Lax-Milgram theorem. Moreover, subtracting \eqref{eq:weak_vareq} from \eqref{eq:auxRel} gives the relation
\begin{align*}
	a\big( (\mv{u}^{\ast} - \mv{u}, \bs{p}^{\ast} - \bs{p}), (\mv{v},\bs{q}) \big) + (\bs{\lambda}_N - \bs{\lambda}, \bs{q})_{0,\Omega} = 0 \qquad
	 \forall \, (\mv{v},\bs{q})\in\VV
\end{align*}
By defining the \emph{global plasticity error contribution}
\begin{align*}
	\ep(\bs{\mu}) := \norm{ \bs{\mu} - \bs{\lambda}_N }_{0,\Omega}^2 + \psi(\bs{p}_N) - (\bs{\mu}, \bs{p}_N)_{0,\Omega}
	 \qquad\forall \, \bs{\mu}\in\Lambda
\end{align*}
and using the above relation we arrive at:

\begin{theorem}[Upper Error Estimate -- {\cite[Thm.5]{ref:Bammer2025aposteriori}}]\label{prop:upper_errprEst}
	For all $\bs{\mu}\in\Lambda$ it holds that
	\begin{align*}
		\Vert (\mv{u} - \mv{u}_N, \bs{p} - \bs{p}_N) \Vert^2 + \Vert \bs{\lambda} - \bs{\lambda}_N \Vert_{0,\Omega}^2 
		\lesssim \Vert (\mv{u}^{\ast} - \mv{u}_N, \bs{p}^{\ast} - \bs{p}_N) \Vert^2 + \ep(\bs{\mu}).
	\end{align*}
\end{theorem}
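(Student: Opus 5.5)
The plan is to split the total error by the triangle inequality into the finite element part $(\mv{u}^{\ast}-\mv{u}_N,\bs{p}^{\ast}-\bs{p}_N)$ and the auxiliary-problem part $(\mv{u}-\mv{u}^{\ast},\bs{p}-\bs{p}^{\ast})$. Writing $e:=(\mv{u}^{\ast}-\mv{u},\bs{p}^{\ast}-\bs{p})$, the relation obtained just before the statement reads
\begin{align*}
	a\big( e, (\mv{v},\bs{q}) \big) = (\bs{\lambda}-\bs{\lambda}_N,\bs{q})_{0,\Omega} \qquad \forall \, (\mv{v},\bs{q})\in\VV,
\end{align*}
and the task reduces to bounding $\norm{e}^2$ and $\norm{\bs{\lambda}-\bs{\lambda}_N}_{0,\Omega}^2$ by $\norm{(\mv{u}^{\ast}-\mv{u}_N,\bs{p}^{\ast}-\bs{p}_N)}^2+\ep(\bs{\mu})$.

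\emph{Step 1 (energy part).} I test with $(\mv{v},\bs{q}) = e$ and use the $\VV$-ellipticity of $a(\cdot,\cdot)$:
\begin{align*}
	c\norm{e}^2 \leq (\bs{\lambda}-\bs{\lambda}_N, \bs{p}^{\ast}-\bs{p})_{0,\Omega}.
\end{align*}
I then split $\bs{p}^{\ast}-\bs{p} = (\bs{p}^{\ast}-\bs{p}_N) + (\bs{p}_N-\bs{p})$. The first term is bounded by Young's inequality, producing $\delta\norm{\bs{\lambda}-\bs{\lambda}_N}^2 + C_\delta\norm{\bs{p}^{\ast}-\bs{p}_N}^2$. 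For the second term I write $\bs{\lambda}-\bs{\lambda}_N = (\bs{\lambda}-\bs{\mu}) + (\bs{\mu}-\bs{\lambda}_N)$ with an arbitrary $\bs{\mu}\in\Lambda$. Three pieces then appear.
\begin{enumerate}
	\item[$\bullet$] $(\bs{\lambda}-\bs{\mu},\bs{p}_N)_{0,\Omega} \leq \psi(\bs{p}_N) - (\bs{\mu},\bs{p}_N)_{0,\Omega}$, because $\bs{\lambda}\in\Lambda$ and \eqref{eq:lambda_weak} give $(\bs{\lambda},\bs{p}_N)_{0,\Omega}\leq\psi(\bs{p}_N)$.
	\item[$\bullet$] $-(\bs{\lambda}-\bs{\mu},\bs{p})_{0,\Omega} \leq 0$, by the second inequality of \eqref{eq:mixed_varF}.
	\item[$\bullet$] $(\bs{\mu}-\bs{\lambda}_N, \bs{p}_N-\bs{p})_{0,\Omega} \leq \delta\norm{\bs{p}-\bs{p}_N}^2 + C_\delta\norm{\bs{\mu}-\bs{\lambda}_N}^2$, by Young's inequality.
\end{enumerate}
Since $\norm{\bs{p}-\bs{p}_N} \leq \norm{e} + \norm{\bs{p}^{\ast}-\bs{p}_N}$, the $\delta$-terms can be absorbed.

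\emph{Step 2 (multiplier part).} To bound $\norm{\bs{\lambda}-\bs{\lambda}_N}_{0,\Omega}$ I use the continuous inf-sup property, which is trivial here: choosing $\mv{v}=\mv{0}$ and $\bs{q}=\bs{\lambda}-\bs{\lambda}_N\in Q$ in the relation gives
\begin{align*}
	\norm{\bs{\lambda}-\bs{\lambda}_N}_{0,\Omega}^2 = a\big( e, (\mv{0},\bs{\lambda}-\bs{\lambda}_N) \big) \lesssim \norm{e}\,\norm{\bs{\lambda}-\bs{\lambda}_N}_{0,\Omega}.
\end{align*}
Hence $\norm{\bs{\lambda}-\bs{\lambda}_N}_{0,\Omega} \lesssim \norm{e}$. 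This is exactly what allows the $\delta\norm{\bs{\lambda}-\bs{\lambda}_N}^2$ term from Step 1 to be absorbed into the left-hand side $c\norm{e}^2$ for small $\delta$.

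\emph{Step 3 (conclusion).} Combining Steps 1 and 2 yields
\begin{align*}
	\norm{e}^2 \lesssim \norm{\bs{p}^{\ast}-\bs{p}_N}^2 + \norm{\bs{\mu}-\bs{\lambda}_N}^2 + \psi(\bs{p}_N) - (\bs{\mu},\bs{p}_N)_{0,\Omega},
\end{align*}
and the right-hand side is bounded by the right-hand side of the theorem. A final triangle inequality gives the statement, using $\norm{(\mv{u}-\mv{u}_N,\bs{p}-\bs{p}_N)} \leq \norm{e} + \norm{(\mv{u}^{\ast}-\mv{u}_N,\bs{p}^{\ast}-\bs{p}_N)}$ and $\norm{\bs{\lambda}-\bs{\lambda}_N}\lesssim\norm{e}$.

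The main obstacle is the bookkeeping of the Young constants: the absorption is circular, since $\norm{\bs{\lambda}-\bs{\lambda}_N}$ is controlled by $\norm{e}$ while $\norm{e}$ is estimated in terms of $\norm{\bs{\lambda}-\bs{\lambda}_N}$. I expect to handle this by first inserting $\norm{\bs{\lambda}-\bs{\lambda}_N}\leq C\norm{e}$ from Step 2 into Step 1, and then choosing $\delta$ small relative to $c/C^2$.
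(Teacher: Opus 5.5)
Your proposal is correct and follows essentially the same route as the cited proof. You test the error relation for $(\mv{u}^{\ast}-\mv{u},\bs{p}^{\ast}-\bs{p})$ with itself and split with an arbitrary $\bs{\mu}\in\Lambda$ via \eqref{eq:lambda_weak} and the continuous inequality constraint. You bound $\norm{\bs{\lambda}-\bs{\lambda}_N}_{0,\Omega}$ by choosing $(\mv{0},\bs{\lambda}-\bs{\lambda}_N)$ in that relation. The absorption you flag as circular is in fact not: Step 2 gives $\norm{\bs{\lambda}-\bs{\lambda}_N}_{0,\Omega}\lesssim\norm{(\mv{u}^{\ast}-\mv{u},\bs{p}^{\ast}-\bs{p})}$ on its own, independently of Step 1.
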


To obtain a lower bound we let
\begin{align*}
	\wh{\bs{\mu}} := \bs{\lambda}_N + \frac{1}{2} \, \bs{p}_N \in Q.
\end{align*}
Then, its $L^2$-projection onto $\Lambda$ is given by 
\begin{align*}
	\bs{\mu}^{\ast} := \min \big\lbrace 1, \, \sigma_y \abs{ \wh{\bs{\mu}} }_F^{-1} \big\rbrace \, \wh{\bs{\mu}},
\end{align*}
see \cite{ref:Bammer2025aposteriori}.

\begin{lem}[Minimizer of $\ep$ -- {\cite[Lem.6, Lem.16]{ref:Bammer2025aposteriori}}]
    The element $\bs{\mu}^{\ast}\in\Lambda$ uniquely minimizes $\ep(\cdot)$ over $\Lambda$. Moreover, the following estimate holds true
		\begin{align}\label{eq:estimate_eplast}
			\ep(\bs{\mu}^{\ast})
			 \lesssim \Vert \bs{\lambda} - \bs{\lambda}_N \Vert_{0,\Omega}^2 + \Big( \Vert\sigma_y\Vert_{0,\Omega} + \Vert\bs{\lambda}\Vert_{0,\Omega} \Big) \, \Vert \bs{p} - \bs{p}_N \Vert_{0,\Omega}.
		\end{align}
\end{lem}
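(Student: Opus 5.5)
The proof has two parts. For the minimization property I expand $\ep$ as a square; for the estimate \eqref{eq:estimate_eplast} I compare $\ep(\bs{\mu}^{\ast})$ with $\ep(\bs{\lambda})$, which is allowed because $\bs{\lambda}\in\Lambda$.

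\emph{Step 1 (minimizer).} For $\bs{\mu}\in\Lambda$ I complete the square:
\begin{align*}
	\ep(\bs{\mu}) = \norm{\bs{\mu} - \bs{\lambda}_N - \tfrac12\bs{p}_N}_{0,\Omega}^2 - (\bs{\lambda}_N,\bs{p}_N)_{0,\Omega} - \tfrac14\norm{\bs{p}_N}_{0,\Omega}^2 + \psi(\bs{p}_N).
\end{align*}
Only the first term depends on $\bs{\mu}$. Hence minimizing $\ep$ over $\Lambda$ is the same as computing the $L^2$-projection of $\wh{\bs{\mu}} = \bs{\lambda}_N + \tfrac12\bs{p}_N$ onto the nonempty, closed, convex set $\Lambda$, so existence and uniqueness follow from the projection theorem in the Hilbert space $Q$. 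The set $\Lambda$ is defined by the pointwise constraint \eqref{eq:lambda_strong} and the norm is an integral, so the projection acts pointwise. At each point it is the projection onto the Frobenius ball of radius $\sigma_y$ in $\SS_{d,0}$, namely $\min\lbrace 1,\sigma_y\abs{\wh{\bs{\mu}}}_F^{-1}\rbrace\,\wh{\bs{\mu}}$. This is exactly $\bs{\mu}^{\ast}$. I would check the variational inequality $(\wh{\bs{\mu}}-\bs{\mu}^{\ast},\bs{\mu}-\bs{\mu}^{\ast})_{0,\Omega}\le 0$ for all $\bs{\mu}\in\Lambda$ pointwise.

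\emph{Step 2 (complementarity).} From the second relation in \eqref{eq:mixed_varF}, $(\bs{\mu},\bs{p})_{0,\Omega}\le(\bs{\lambda},\bs{p})_{0,\Omega}$ for all $\bs{\mu}\in\Lambda$. Taking the supremum and using $\psi(\bs{q})=\sup_{\bs{\mu}\in\Lambda}(\bs{\mu},\bs{q})_{0,\Omega}$ gives $\psi(\bs{p})\le(\bs{\lambda},\bs{p})_{0,\Omega}$. Combined with \eqref{eq:my_inequality} this yields $\psi(\bs{p})=(\bs{\lambda},\bs{p})_{0,\Omega}$.

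\emph{Step 3 (estimate).} By Step 1, $\ep(\bs{\mu}^{\ast})\le\ep(\bs{\lambda})$. Using Step 2,
\begin{align*}
	\ep(\bs{\lambda}) = \norm{\bs{\lambda}-\bs{\lambda}_N}_{0,\Omega}^2 + \big(\psi(\bs{p}_N)-\psi(\bs{p})\big) - (\bs{\lambda},\bs{p}_N-\bs{p})_{0,\Omega}.
\end{align*}
The Lipschitz continuity of $\psi$ follows from the reverse triangle inequality and Cauchy--Schwarz:
\begin{align*}
	\abs{\psi(\bs{p}_N)-\psi(\bs{p})} \le \int_\Omega \sigma_y\,\abs{\bs{p}_N-\bs{p}}_F \dd\mv{x} \le \norm{\sigma_y}_{0,\Omega}\norm{\bs{p}-\bs{p}_N}_{0,\Omega}.
\end{align*}
The last term satisfies $\abs{(\bs{\lambda},\bs{p}_N-\bs{p})_{0,\Omega}}\le\norm{\bs{\lambda}}_{0,\Omega}\norm{\bs{p}-\bs{p}_N}_{0,\Omega}$. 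Together these give \eqref{eq:estimate_eplast} with constant $1$.

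The only point needing care is the pointwise characterization of the projection in Step 1, i.e.\ confirming that $\bs{\mu}^{\ast}$ is the minimizer. Everything else reduces to the complementarity identity of Step 2 and elementary inequalities.
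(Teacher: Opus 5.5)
Your proposal is correct and follows essentially the same route as the cited proof. Completing the square shows that $\ep(\bs{\mu})$ equals $\norm{\bs{\mu}-\wh{\bs{\mu}}}_{0,\Omega}^2$ plus a $\bs{\mu}$-independent constant, so $\bs{\mu}^{\ast}$ is the pointwise-computed $L^2$-projection of $\wh{\bs{\mu}}$ onto $\Lambda$; the estimate then follows from $\ep(\bs{\mu}^{\ast})\le\ep(\bs{\lambda})$, the complementarity identity $\psi(\bs{p})=(\bs{\lambda},\bs{p})_{0,\Omega}$ and the Lipschitz continuity of $\psi$. The pointwise projection check you flag is routine: where $\abs{\wh{\bs{\mu}}}_F>\sigma_y$, the difference $\wh{\bs{\mu}}-\bs{\mu}^{\ast}$ is a positive multiple of $\bs{\mu}^{\ast}$, and $\bs{\mu}^{\ast}:(\bs{\mu}-\bs{\mu}^{\ast})\le\sigma_y^2-\sigma_y^2=0$, so nothing is missing.
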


Note that \eqref{eq:estimate_eplast} contains the linear term $\Vert \bs{p} - \bs{p}_N\Vert_{0,\Omega}$ which also appears in the next Theorem (typical for a posteriori error estimates in the context of elastoplasticity).

\begin{theorem}[Lower Error Estimate -- {\cite[Lem.7,  Thm.8]{ref:Bammer2025aposteriori}}]\label{prop:lower_errorEst}
    It holds that
    \begin{align*}
        \Vert (\mv{u}^{\ast} - \mv{u}_N, \bs{p}^{\ast} - \bs{p}_N) \Vert^2 \lesssim \Vert (\mv{u} - \mv{u}_N, \bs{p} - \bs{p}_N) \Vert^2 + \Vert \bs{\lambda} - \bs{\lambda}_N \Vert_{0,\Omega}^2.
    \end{align*}
    Moreover, for the minimizer $\bs{\mu}^{\ast}\in\Lambda$ of $\ep(\cdot)$ we find that
		\begin{align*}
			\Vert (\mv{u}^{\ast} - \mv{u}_N, \bs{p}^{\ast} - \bs{p}_N) \Vert^2 &+ \ep(\bs{\mu}^{\ast})
			 \lesssim \Vert (\mv{u} - \mv{u}_N, \bs{p} - \bs{p}_N) \Vert^2 + \Vert \bs{\lambda} - \bs{\lambda}_N \Vert_{0,\Omega}^2 + \Vert \bs{p} - \bs{p}_N \Vert_{0,\Omega}.
		\end{align*}
\end{theorem}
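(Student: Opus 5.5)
We prove the two estimates of the theorem one after the other. The first one uses only the Lipschitz stability of the linear auxiliary problem. The second one combines the first estimate with the bound \eqref{eq:estimate_eplast} from Lemma~[Minimizer of $\ep$].

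\emph{First estimate.} Write $(\mv{u}^{\ast}-\mv{u}_N,\bs{p}^{\ast}-\bs{p}_N) = (\mv{u}^{\ast}-\mv{u},\bs{p}^{\ast}-\bs{p}) + (\mv{u}-\mv{u}_N,\bs{p}-\bs{p}_N)$. By the triangle inequality it suffices to bound $\norm{(\mv{u}^{\ast}-\mv{u},\bs{p}^{\ast}-\bs{p})}$ by $\norm{\bs{\lambda}-\bs{\lambda}_N}_{0,\Omega}$. We use the relation obtained by subtracting \eqref{eq:weak_vareq} from \eqref{eq:auxRel}:
\begin{align*}
a\big((\mv{u}^{\ast}-\mv{u},\bs{p}^{\ast}-\bs{p}),(\mv{v},\bs{q})\big) = (\bs{\lambda}-\bs{\lambda}_N,\bs{q})_{0,\Omega} \qquad \forall\,(\mv{v},\bs{q})\in\VV .
\end{align*}
We test with $(\mv{v},\bs{q}) = (\mv{u}^{\ast}-\mv{u},\bs{p}^{\ast}-\bs{p})$. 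The $\VV$-ellipticity of $a(\cdot,\cdot)$ bounds the left-hand side from below by a multiple of $\norm{(\mv{u}^{\ast}-\mv{u},\bs{p}^{\ast}-\bs{p})}^2$. The Cauchy--Schwarz inequality bounds the right-hand side by
\begin{align*}
\norm{\bs{\lambda}-\bs{\lambda}_N}_{0,\Omega}\,\norm{\bs{p}^{\ast}-\bs{p}}_{0,\Omega} \leq \norm{\bs{\lambda}-\bs{\lambda}_N}_{0,\Omega}\,\norm{(\mv{u}^{\ast}-\mv{u},\bs{p}^{\ast}-\bs{p})}.
\end{align*}
Dividing gives $\norm{(\mv{u}^{\ast}-\mv{u},\bs{p}^{\ast}-\bs{p})} \lesssim \norm{\bs{\lambda}-\bs{\lambda}_N}_{0,\Omega}$. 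Squaring the triangle inequality and using $(x+y)^2\le 2x^2+2y^2$ yields the first estimate.

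\emph{Second estimate.} We add the first estimate to \eqref{eq:estimate_eplast} for $\bs{\mu}^{\ast}$. This leaves the term $(\norm{\sigma_y}_{0,\Omega}+\norm{\bs{\lambda}}_{0,\Omega})\norm{\bs{p}-\bs{p}_N}_{0,\Omega}$, which must be absorbed into $\norm{\bs{p}-\bs{p}_N}_{0,\Omega}$ up to a constant.

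The factor $\norm{\sigma_y}_{0,\Omega} = \sigma_y|\Omega|^{1/2}$ is a data constant independent of $h$ and $p$. Since $\bs{\lambda}\in\Lambda$ satisfies $|\bs{\lambda}|_F\le\sigma_y$ a.e., we also have $\norm{\bs{\lambda}}_{0,\Omega}\le\norm{\sigma_y}_{0,\Omega}$. Hence the prefactor is bounded by $2\sigma_y|\Omega|^{1/2}$, which may be absorbed into the constant hidden in $\lesssim$.

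This absorption is the only point needing care. The hidden constant is allowed to depend on the data ($\sigma_y$, $\Omega$, $\CC$, $\HH$) but not on $h$, $p$ or the particular triple $(\mv{u}_N,\bs{p}_N,\bs{\lambda}_N)$. Using $\bs{\lambda}\in\Lambda$, rather than any stability bound for the approximation, is what guarantees this. Combining the two bounds then gives the second inequality. The linear term $\norm{\bs{p}-\bs{p}_N}_{0,\Omega}$ cannot be squared away, consistent with the remark preceding the theorem.
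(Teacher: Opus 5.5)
Your proof is correct and follows essentially the same route as the cited source: stability of the auxiliary problem, obtained by testing the difference of \eqref{eq:auxRel} and \eqref{eq:weak_vareq} with $(\mv{u}^{\ast}-\mv{u},\bs{p}^{\ast}-\bs{p})$, plus the triangle inequality gives the first bound, and adding \eqref{eq:estimate_eplast} gives the second. Your observation that $\bs{\lambda}\in\Lambda$ implies $\Vert\bs{\lambda}\Vert_{0,\Omega}\le\Vert\sigma_y\Vert_{0,\Omega}$ correctly turns the prefactor into a data constant that can be absorbed into $\lesssim$.
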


Under additional assumptions on $\bs{p}_N$ and $\bs{\lambda}_N$ (particularly including the lowest-order case) the linear term $\Vert \bs{p} - \bs{p}_N \Vert_{0,\Omega}$ and $\ep$ \emph{do not} appear in the error estimates of Theorem~\ref{prop:upper_errprEst} and \ref{prop:lower_errorEst}, see \cite[Cor.~9]{ref:Bammer2025aposteriori}. The last two theorems also allow us to construct a residual-based a posteriori error estimator $\eta^2$, that is reliable and efficient. We refer to \cite{ref:Bammer2025aposteriori} for a detailed construction and the proofs of reliability and efficiency.


\newpage


%
%

\end{document}